\numberwithin{equation}{section}
\newtheorem{lemma}{Lemma}[section]
\newtheorem{theorem}[lemma]{Theorem}
\newtheorem{proposition}[lemma]{Proposition}
\newtheorem{corollary}[lemma]{Corollary}
\theoremstyle{definition}
\newtheorem{remark}[lemma]{Remark}
\newcommand{\Gal}{\mathrm{Gal}}
\newcommand{\Hol}{\mathrm{Hol}}
\newcommand{\Aut}{\mathrm{Aut}}
\newcommand{\Perm}{\mathrm{Perm}}
\newcommand{\ord}{\mathrm{ord}}
\newcommand{\Z}{\mathbb{Z}}
\newcommand{\K}{\mathcal{K}}
\newcommand{\F}{\mathcal{F}}
\newcommand{\NN}{\mathcal{N}}
\newcommand{\LRA}{\Leftrightarrow}
\begin{document}
\title{Skew Braces of Squarefree Order}

\author{Ali A.~Alabdali}


\address{(A.~Alabdali) Department of Mathematics, College of Education for Pure Science,
University of Mosul, Mosul, Iraq.}   
\email{aaab201@exeter.ac.uk}

\author{Nigel P.~Byott}
\address{(N.~Byott) Department of Mathematics, College of Engineering,
  Mathematics and Physical Sciences, University of Exeter, Exeter 
EX4 4QF U.K.}  
\email{N.P.Byott@exeter.ac.uk}

\date{\today}
\subjclass[2010]{16T25, 16Y99, 12F10}
\keywords{Skew braces; quantum Yang-Baxter equation; groups of
  squarefree order}

\bibliographystyle{amsalpha}

\begin{abstract} 
Let $n \geq 1$ be a squarefree integer, and let $M$, $A$ be two groups
of order $n$. Using our previous results on the enumeration of
Hopf-Galois structures on Galois extensions of fields of squarefree
degree, we determine the number of skew braces (up to isomorphism)
with multiplicative group $M$ and additive group $A$.  As an
application, we enumerate skew braces whose order is the product of three
distinct primes.
\end{abstract}

\maketitle

\section{Introduction}

Since the seminal work of Drinfeld \cite{Dr} on quantum groups, there
has been a great deal of interest in algebraic systems which give rise
to set-theoretic solutions of the quantum Yang-Baxter equation
(QYBE). Etingof, Schedler and Soloviev \cite{ESS} defined the
structure group attached to a set-theoretic solution of QYBE, and
studied many of its properties. Subsequently, Rump showed how
set-theoretic solutions arise from cycle sets \cite{Rump05} and
radical rings \cite{Rump}, and in the latter paper he also introduced
braces as a generalisation of radical rings. Braces give rise
nondegenerate involutive set-theoretic solutions of QYBE, and have
recently been studied intensively \cite{Bach-p3, CJO16, LV16,
  CGS18}. Several generalisatons of braces have been investigated,
including skew braces \cite{GV}, which give noninvolutive solutions to
QYBE, and semi-braces \cite{CCS}, which give degenerate solutions.

Another area of algebra in which radical rings have found application
is the study of Hopf-Galois structures on field extensions. If
$L/K$ is a finite Galois extension of fields with Galois group
$\Gamma$, then $L/K$ may admit many Hopf-Galois structures, and these can be 
described in group-theoretic
terms. In particular, the Hopf algebra $H$ acting on $L/K$ in any
Hopf-Galois structure is a twisted form of the group algebra $K[G]$,
where $G$ is a group acting regularly on $\Gamma$. The Hopf-Galois
structures can then be partitioned according to the isomorphism type
of $G$. We refer to the isomorphism type of $G$ as the {\em type} of the
Hopf-Galois structure. Using the connection found in \cite{CDVS} between radical
rings and abelian regular subgroups of the affine group of a field,
Featherstonhaugh, Caranti and Childs \cite{FCC} studied the possible
abelian types of Hopf-Galois structures on an abelian extension of
prime-power degree.

Given the role of radical rings in these two situations, it is perhaps
not surprising that there should be a deep connection between braces
and Hopf-Galois structures. This connection was first mentioned
explicitly in \cite{bachiller}, and was clarified and extended to the
setting of skew braces in the appendix to \cite{SV}. Several papers
and preprints have exploited this connection \cite{Childs-JA,
  Childs-arXiv, NZ}. 

In this paper, we make further use of the connection between skew
braces and Hopf-Galois structures in order to study skew braces 
of squarefree order. Given a squarefree number $n \geq 1$ and two
groups $M$, $A$ of order $n$, we determine the number $b(M,A)$ of
skew braces $(B,+,\ast)$ (up to isomorphism) with multiplicative group
$(B,\ast)$ isomorphic to $M$ and additive group $(B,+)$
isomorphic to $A$. To do so, we build upon our work in  
\cite{AB-galois} where, for any two groups $\Gamma$, $G$ of
squarefree order $n$, we determined the number $e(\Gamma,G)$ of
Hopf-Galois structures of type $G$ on a Galois extension with Galois
group $\Gamma$. The special case where $\Gamma$ is cyclic was
previously treated in \cite{AB}. In the final two sections of this
paper, we give some examples, in particular treating in full the case
where $n$ is the product of three primes. 

For comparison with our results, we note that the total number of skew
braces of order $n$ for each $n \leq 30$ is given in \cite[Table
  5.1]{GV}. These values were found using a computer calculation. The
braces (but not skew braces) with finite cyclic multiplicative group
are determined in \cite{Rump-cyc}, and the braces of order $p^3$ for $p$
prime are classified in \cite{Bach-p3}. The skew braces whose
multiplicative group is the Heisenberg group of order $p^3$ for $p>3$
are enumerated in \cite{NZ}. Further results on skew braces of order
$p^3$ can be found in \cite{NZ-thesis}.

\section{Statement of Main Result}

Before stating our main result, we must first describe the groups of
squarefree order.  Specialising the characterisation in \cite{MM} of
finite groups in which every Sylow subgroup is cyclic, we obtain the
following classification, cf.~\cite[Lemma 3.2]{AB}.

\begin{lemma} \label{sf-class}
Let $n \geq 1$ be squarefree. Then any group of order $n$ has the form
$$   G(d,e,k)= \langle \sigma, \tau \colon \sigma^e=\tau^d=1, \tau
\sigma \tau^{-1} = \sigma^k \rangle $$ 
where $n=de$, $\gcd(d,e)=1$ and $\ord_e(k)=d$. Conversely, any choice
of $d$, $e$ and $k$ satisfying these conditions gives a group
$G(d,e,k)$ of order $n$. Moreover, two such groups $G(d,e,k)$ and
$G(d',e',k')$ are isomorphic if and only if $d=d'$, $e=e'$, and
$k$, $k'$  generate the same cyclic subgroup of $\Z_e^\times$.
\end{lemma}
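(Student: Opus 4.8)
The statement has three parts: that every group of squarefree order has the displayed form, that the stated conditions always produce such a group, and the isomorphism criterion. For the first part, the plan is to observe that since $n$ is squarefree every Sylow subgroup of a group $G$ of order $n$ has prime order and is therefore cyclic; thus $G$ falls under the classification of \cite{MM}. Specialising that result, $G$ has a normal cyclic subgroup $\langle\sigma\rangle$ of some order $e\mid n$ with cyclic complement $\langle\tau\rangle$ of order $d=n/e$, and conjugation acts by $\tau\sigma\tau^{-1}=\sigma^k$ for some $k$. Here $\gcd(d,e)=1$ is forced by squarefreeness, since a common prime divisor of $d$ and $e$ would yield a Sylow subgroup of order $p^2$; and one may arrange the decomposition (by taking $\langle\sigma\rangle$ as large as possible) so that $\langle\tau\rangle$ acts faithfully on $\langle\sigma\rangle$, which is exactly the condition $\ord_e(k)=d$. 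The relation $\tau^d=1$ then forces $k^d\equiv 1\pmod e$, consistent with $\ord_e(k)=d$.

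For the converse, given $d,e,k$ with $\gcd(d,e)=1$ and $\ord_e(k)=d$, I would note that $k\mapsto(\sigma\mapsto\sigma^k)$ defines a homomorphism $\Z_d\to\Aut(\Z_e)=\Z_e^\times$, well defined because $k^d\equiv 1\pmod e$, so the semidirect product $\Z_e\rtimes\Z_d$ it determines is a group of order $de=n$ with the presentation $G(d,e,k)$.

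The substantive part is the isomorphism criterion, and the crux is to show that $d$ and $e$ are determined by the abstract isomorphism type. The key observation is that the faithfulness condition $\ord_e(k)=d$ makes $\langle\sigma\rangle$ equal to the Fitting subgroup $F(G)$: it is an abelian normal subgroup, hence contained in $F(G)$, while any strictly larger nilpotent normal subgroup would contain an element with nontrivial $\tau$-component acting trivially on $\langle\sigma\rangle$, contradicting faithfulness. Since $F(G)$ is characteristic, $e=|F(G)|$ and $d=[G:F(G)]$ are isomorphism invariants, so $G(d,e,k)\cong G(d',e',k')$ forces $d=d'$ and $e=e'$.

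Finally, assuming $d=d'$ and $e=e'$, I would analyse an isomorphism $\phi\colon G(d,e,k)\to G(d,e,k')$ directly. Since $\phi$ maps $\langle\sigma\rangle$ onto $\langle\sigma'\rangle$, it sends $\sigma\mapsto\sigma'^a$ with $a\in\Z_e^\times$ and $\tau\mapsto\sigma'^b\tau'^c$ with $\gcd(c,d)=1$. Comparing the two ways of computing $\phi(\tau\sigma\tau^{-1})$ yields $k\equiv k'^c\pmod e$; since $\gcd(c,d)=1$ and $\ord_e(k')=d$, this says precisely that $k$ and $k'$ generate the same cyclic subgroup of $\Z_e^\times$, and conversely any such $c$ reconstructs an explicit isomorphism. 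I expect the main obstacle to be the identification of $\langle\sigma\rangle$ as a characteristic subgroup, since without the faithfulness hypothesis $e$ and $d$ would not be well defined; everything after that is bookkeeping on how the generators transform.
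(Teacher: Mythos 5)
Your argument is correct, and for the classification and existence parts it is essentially the paper's: the paper gives no proof of this lemma, simply stating it as a specialisation of the characterisation in \cite{MM} of groups with all Sylow subgroups cyclic (cf.~\cite[Lemma 3.2]{AB}), which is exactly your first two paragraphs. Where you genuinely add something is the isomorphism criterion, which the paper delegates to \cite{AB}: your identification of $\langle\sigma\rangle$ with the Fitting subgroup $F(G)$ is a clean, citation-free way to see that $d$ and $e$ are isomorphism invariants, and the subsequent computation $k\equiv k'^c\pmod e$ with $\gcd(c,d)=1$ correctly translates into the condition $\langle k\rangle=\langle k'\rangle$ in $\Z_e^\times$ because $\ord_e(k')=d$. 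One step you should make explicit in the Fitting argument: a nilpotent group of squarefree order is a direct product of groups of prime order for distinct primes, hence cyclic and in particular abelian; it is this that forces an element $\sigma^a\tau^b\in F(G)$ to centralise $\sigma$, whence $k^b\equiv 1\pmod e$ and $b\equiv 0\pmod d$ by faithfulness. As written, ``acting trivially on $\langle\sigma\rangle$'' is asserted rather than derived. With that sentence added, your proof is complete and arguably more self-contained than the paper's reliance on \cite{MM} and \cite{AB}, at the cost of redoing a small piece of the standard theory of metacyclic groups.
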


Here, for a natural number $m$, we write $\Z_m$ for the ring of integers modulo
$m$, and $\Z_m^\times$ for its group of units. Also, for $a \in \Z$
with $\gcd(a,m)=1$, we denote by $\ord_m(a)$ the order of $a$ in $\Z_m^\times$. 

We now fix a squarefree number $n$, and two groups of order
$n$:
\begin{equation} \label{def-AM} 
  A=G(d,e,k), \qquad M=G(\delta, \epsilon,\kappa). 
\end{equation}
(Our notation is chosen to be consistent with \cite{AB-galois}, except
that $A$, $M$ correspond respectively to $G$, $\Gamma$ in that paper.)
We then define 
\begin{equation} \label{zg-gamma-zeta}  
    z=\gcd(k-1, e),   \quad g=e/z, \quad  \zeta=\gcd(\kappa-1,
    \epsilon),   \quad 
     \gamma=\epsilon/\zeta. 
\end{equation}
Thus $z$, $g$ depend only on $A$, and $\zeta$, $\gamma$ depend only on
$M$. We also set
$$  w=\varphi(\gcd(\delta,d)), $$
which depends on both $A$ and $M$. Here $\varphi$ is the Euler totient
function.  

We can now state our main result. Recall that $\omega(g)$ denotes
the number of (distinct) prime factors of the squarefree integer $g$. 

\begin{theorem} \label{thm-braces}
Let $M$ and $A$ be groups of squarefree order $n$. Then, with the
above notation, the number $b(M,A)$ of isomorphism classes of skew
braces with multiplicative group isomorphic to $M$ and additive group
isomorphic to $A$ is given by
$$ b(M,A) = \begin{cases} 2^{\omega(g)} w & \mbox{if } \gamma \mid e,
  \\ 0 & \mbox{if } \gamma \nmid e. \end{cases} $$
\end{theorem}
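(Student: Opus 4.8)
The plan is to translate the problem into one about regular subgroups of a holomorph and then to count orbits under the additive automorphism group. Recall the dictionary set out in the appendix to \cite{SV}, following \cite{GV}: a skew brace with additive group $(B,+)\cong A$ is the same datum as a regular subgroup $G$ of the holomorph $\Hol(A)=A\rtimes\Aut(A)$, its multiplicative group being the isomorphism type of $G$; moreover two such skew braces are isomorphic precisely when the corresponding regular subgroups are conjugate by an element of $\Aut(A)$, viewed inside $\Hol(A)$. The first step is thus to identify $b(M,A)$ with the number of $\Aut(A)$-conjugacy classes of regular subgroups $G\leq\Hol(A)$ with $G\cong M$.

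The second step is to import the enumeration of \cite{AB-galois}. By Byott's translation, the Hopf-Galois count computed there satisfies $e(M,A)=(|\Aut(M)|/|\Aut(A)|)\,R$, where $R$ is the number of regular subgroups of $\Hol(A)$ isomorphic to $M$; in practice I would use the \emph{explicit parametrisation} of these subgroups obtained in that paper, not merely their number. The existence criterion established there shows that no such subgroup exists unless $\gamma\mid e$, which immediately gives $b(M,A)=0$ in that case and settles the second branch of the formula. For the remainder I assume $\gamma\mid e$.

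The third and main step is the orbit count. Since $n$ is squarefree, $\Hol(A)$ and $\Aut(A)$ decompose compatibly with the factorisation of $A=G(d,e,k)$ into its cyclic normal part $\langle\sigma\rangle$ of order $e$ and the complement $\langle\tau\rangle$ of order $d$, so the computation can be organised prime by prime. I would exploit that the commutator subgroup $[A,A]=\langle\sigma^{z}\rangle$ has order $g=e/z$, so $\omega(g)$ counts exactly the primes at which $A$ is genuinely non-abelian; the expectation is that each such prime contributes an independent binary choice to the orbit data, producing the factor $2^{\omega(g)}$, while the interaction between the order-$d$ and order-$\delta$ ``top'' parts contributes the factor $w=\varphi(\gcd(\delta,d))$. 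Concretely, I would write each regular subgroup in the parametrised form of \cite{AB-galois}, compute the effect of conjugation by a general element of $\Aut(A)$ on the parameters, and read off a complete set of orbit representatives.

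The hard part will be precisely this orbit analysis: computing the $\Aut(A)$-stabiliser of a parametrised regular subgroup and proving that the orbits are indexed exactly by the claimed data. The obstacle is that $\Aut(A)$ does not act freely, so one cannot recover the orbit count by dividing $R$ by $|\Aut(A)|$; the stabilisers vary, and the factor $2^{\omega(g)}$ should reflect that at each non-abelian prime the conjugation action has nontrivial kernel on the relevant parameters. I anticipate that the most delicate bookkeeping will lie in verifying that the local contributions are independent, so that the global count is their product, and in checking that the $\varphi(\gcd(\delta,d))$ choices for the top part are genuinely inequivalent under $\Aut(A)$; here the explicit description of $\Hol(A)$ for squarefree order should be indispensable.
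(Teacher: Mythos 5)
Your overall strategy coincides with the paper's: identify $b(M,A)$ with the number of $\Aut(A)$-conjugacy classes of regular subgroups of $\Hol(A)$ isomorphic to $M$, dispose of the case $\gamma \nmid e$ via the existence criterion of \cite{AB-galois} (Proposition \ref{d-div-gam-del} here), and then count orbits prime by prime using the explicit parametrisation of regular subgroups from that paper. Your observation that $\Aut(A)$ does not act freely, so that one must compute stabilisers rather than divide by $|\Aut(A)|$, is exactly the point; the paper encodes this as the weighted sum (\ref{count-braces}) over quintuples with weight $1/I_h(t,a,c,u,v)$.

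The gap is that everything after this setup is deferred. The entire quantitative content of the theorem lives in the stabiliser computation, which your proposal announces but does not perform: one must write $\alpha=\theta^r\phi_s$, compute $\alpha X\alpha^{-1}$ and $(\alpha Y\alpha^{-1})Y^{-1}$ explicitly in $\Hol(A)$, reduce the condition $\alpha M^*\alpha^{-1}=M^*$ to the three congruences (\ref{t-di})--(\ref{cong3}), and then determine, for each of roughly ten classes of primes (the sets $P$, $Q'$, $Q''$, $R\cup S_h'$, $S_{h,1}^{\pm}$, $S_{h,2}^{\pm}$, $T$, $U'$, $U''$ of Table \ref{Iq}), the local index $I_q$ and the local count $N_q$. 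I would also caution against your heuristic that ``each prime dividing $g$ contributes an independent binary choice to the orbit data'': that is not quite how the factor $2^{\omega(g)}$ arises. What actually happens is that the ratio $N_q/I_q$ carries a factor $2$ for $q\in R\cup S_h'\cup T\cup U$ (two admissible residues of $t$ mod $q$), whereas for $q\in S_h^+\cup S_h^-$ the factor $2$ only appears after summing over the $2^{|S_h^+\cup S_h^-|}$ ways of allocating those primes between the two residues of $t$ -- a subtlety which is precisely why the Hopf--Galois count $e(M,A)$ of \cite{AB-galois} depends on the sets $S_h^{\pm}$ while $b(M,A)$ does not. Without carrying out this case analysis and verifying that the product of the $N_q/I_q$ telescopes to $\gamma\varphi(\gamma)\varphi(\gcd(\zeta,e))\,2^{\omega(g)}$ (independently of $h$ and of the allocation), the formula $2^{\omega(g)}w$ is asserted rather than proved.
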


\section{Skew Braces and Hopf-Galois Structures} \label{SB-HGS}

For the convenience of the reader, we review in this section the
necessary background on skew braces and Hopf-Galois structures,
emphasising the connection between the corresponding enumeration
problems.

A skew left brace $(B,+,\ast)$ is a set $B$ with two binary operations
such that $(B,+)$ and $(B,\ast)$ are groups, and $a\ast(b+c) = (a \ast
b) + (-a) + (a \ast c)$ for all $a$, $b$, $c \in B$, where $-a$ is the
inverse of $a$ under $+$.  We call $(B,+)$ the additive group of $B$
and $(B,\ast)$ the multiplicative group. If $(B,+)$ is abelian, then
$(B,+,\ast)$ is a left brace. Right skew braces and right braces are
defined analogously, but we will not need these concepts in this
paper. We therefore omit the adjective ``left'' from now on.

An isomorphism between two skew braces is a bijection between their
underlying sets which is an isomorphism of both the additive groups
and the multiplicative groups. For groups $M$, $A$ of the same order, we write $b(M,A)$ for the number of
isomorphism types of skew braces $(B,+,\ast)$ with $(B,\ast) \cong M$
and $(B,+) \cong A$.

Let $(B,+,\ast)$ be a skew brace. There is a homomorphism of groups 
$$ \lambda:(B,\ast) \to \Aut(B,+), \qquad b \mapsto \lambda_b \mbox{
  with }\lambda_b(a) = b \ast a - a. $$
Thus $(B,\ast)$ acts on $(B,+)$. Moreover, the identity map on $B$
induces a bijection $i:(B,\ast) \to (B,+)$ which satisfies the
$1$-cocycle identity with respect to this action:
$$ i(b \ast c) = i(b) + \lambda_b(i(c)) \mbox{ for all } b, c \in B.  $$

Now consider the holomorph $\Hol(B,+)=(B,+) \rtimes \Aut(B,+)$ of the
group $(B,+)$. We view $\Hol(B,+)$ as a subgroup of the group $\Perm(B)$
of permutations of the underlying set $B$, with the normal subgroup
$(B,+)$ of $\Hol(B,+)$ acting as left translations. The map $(i,\lambda) : (B,\ast) \to
\Hol(B,+)$ is a group homomorphism whose image is regular on
$B$. (Recall that a subgroup $G \subset \Perm(X)$ is regular on $X$ if
it is transitive and the stabiliser of any point in $X$ is the trivial
group. When $G$ is finite, this implies that $|G|=|X|$.) Thus the skew
brace $(B,+,\ast)$ gives rise to a regular embedding of $(B,\ast)$
into the holomorph of $(B,+)$. 

We can reverse this construction: for abstract finite groups $M$ and
$A$, a regular embedding $\beta: M \to \Hol(A)$ gives rise to a
homomorphism $\lambda: M \to \Aut(A)$, and so to an action of $M$ on
$A$, together with a bijective cocycle $i:M \to A$ with respect to
this action. Letting $(B,\ast)=M$, and defining a new operation $+$ on
$B$ by pulling back the group operation of $A$ via $i$, we obtain a
skew brace $(B,+,\ast)$ with $(B,\ast) \cong M$ and $(B,+) \cong
A$. We may compose $\beta$ (on the right) with any element of
$\Aut(M)$, and this does not change the isomorphism type of $B$. An
automorphism $\psi$ of $A$ induces an automorphism of $\Hol(A)$,
which is given by conjugation with $\psi$ in $\Hol(A)$. Composing
$\beta$ (on the left) with this automorphism again does not change
the isomorphism type of $B$. Thus $b(M,A)$ is just the number of
$\Aut(A) \times \Aut(M)$-orbits of regular embeddings $M \to \Hol(A)$,
where the action of $\Aut(M)$ on regular embeddings is by composition,
and that of $\Aut(A)$ by conjugation. Since two regular embeddings
$\beta$ have the same image if and only if they are in the same
$\Aut(M)$-orbit, $b(M,A)$ is also number of $\Aut(A)$-orbits of
regular subgroups in $\Hol(A)$ isomorphic to $M$.

We note that the groups $\Aut(A)$ and $\Aut(M)$ each act 
without fixed points on the set of regular embeddings $\beta : M \to \Hol(A)$, 
but their product $\Aut(A) \times \Aut(M)$ does not. Thus the orbits of $\Aut(A) \times \Aut(M)$ on this set of regular embeddings, or, equivalently, the orbits of 
$\Aut(A)$ on the set of regular subgroups of $\Hol(A)$ isomorphic to $M$, are in general not all of the same size. 

We now turn to Hopf-Galois structures. A Hopf-Galois structure on a
finite extension of fields $L/K$ consists of a cocommutative $K$-Hopf
algebra $H$ and an action of $H$ on $L$ making $L/K$ into an
$H$-Galois extension in the sense of Chase and Sweedler \cite{CS}. The
motivating example is when $L/K$ is a Galois extension in the
classical sense, and $H$ is the group algebra $H=K[\Gamma]$ for
$\Gamma=\Gal(L/K)$, with the natural action of $H$ on $L$. The Galois
extension $L/K$ may also admit other (non-classical) Hopf-Galois
structures. Greither and Pareigis \cite{GP} showed that the
Hopf-Galois structures on $L/K$ correspond bijectively to the regular
subgroups $G \subset \Perm(\Gamma)$ which are normalised by the group
$\lambda(\Gamma)$ of left translations by $\Gamma$. This shows that
the number of such Hopf-Galois structures can be obtained by a purely
group-theoretic calculation. The Hopf algebra occurring in the
Hopf-Galois structure given by the regular subgroup $G$ is
$L[G]^\Gamma$, the Hopf algebra of $\Gamma$-fixed points in the group
algebra $L[G[$, where $\Gamma$ acts simultaneously on $L$ as field
automorphisms and on $G \subset \Perm(\Gamma)$ as conjugation with
left translations. We refer to the isomorphism type of $G$ as the {\em
  type} of this Hopf-Galois structure. We write $e(\Gamma,G)$ for the
number of Hopf-Galois structures of type $G$ on a Galois extension
$L/$K with $\Gal(L/K) \cong \Gamma$.

We change notation to facilitate comparison between the
enumeration problems for skew braces and for Hopf-Galois
structures. Let $M$ and $A$ be two abstract finite groups of the same
order. Then $e(M,A)$ is the number of regular subgroups in $\Perm(M)$
isomorphic to $A$ and normalised by $\lambda(M)$. Equivalently,
$e(M,A)$ is the number of $\Aut(A)$-orbits of regular embeddings
$\alpha:A \to \Perm(M)$ with image normalised by $\lambda(M)$. There
is a canonical bijection between regular embeddings $\alpha:A \to
\Perm(M)$ and regular embeddings $\beta : M \to \Perm(A)$, under which
$\alpha(A)$ is normalised by $\lambda(M)$ if and only if $\beta(M)
\subset \Hol(A)$. Thus $e(M,A)$ is the number of $\Aut(A)$-orbits of
regular embeddings $\beta: M \to \Hol(A)$. Since the $\Aut(M)$-orbits
of such $\beta$ correspond to the regular subgroups of $\Hol(A)$
isomorphic to $M$, we may in practice evaluate $e(M,A)$ by first
determining the number $e'(M,A)$ of these subgroups. We then have
$$ e(M,A) = \frac{|\Aut(M)|}{|\Aut(A)|} \ e'(M,A). $$ 
This is often simpler than working directly with regular subgroups of
$\Perm(M)$, as $\Hol(A)$ is usually much smaller than $\Perm(M)$.

In summary, $b(M,A)$ is the number of $\Aut(A)$-orbits on the set
of regular subgroups of $\Hol(A)$ isomorphic to $M$, whereas $e'(M,A)$ 
is just the number of such subgroups. Once we have determined all such subgroups, 
we can easily find the number $e(M,A)$ of Hopf-Galois structures, but, as the $\Aut(A)$-orbits 
on these subgroups
can be of different sizes, obtaining the number $b(M,A)$ of skew braces  
may require significant further calculation.

\begin{remark}
For Galois extensions of squarefree degree, the formula for
$e(\Gamma,G)$ we obtained in \cite[Theorem 2.2]{AB-galois} is much
more complex than the formula for the number of skew braces in Theorem
\ref{thm-braces} of this paper. This is because the
number of Hopf-Galois structures depends in an intricate way on
the interplay of the structures of the two groups $\Gamma$, $G$. We
are not aware, however, of any approach to proving Theorem
\ref{thm-braces} of this paper which avoids the detailed analysis of
regular subgroups at the heart of our enumeration of the Hopf-Galois structures.
\end{remark}

\section{Regular subgroups in $\Hol(A)$}

Let $M$ and $A$ be two groups of squarefree order $n$, as in Theorem
\ref{thm-braces}.  In this section, we recall some results from
\cite{AB-galois}, and in particular we determine in Lemma \ref{quin}
explicit generators for all regular subgroups of $\Hol(A)$ isomorphic
to $M$. We will keep to the notation of \cite{AB-galois} except that
we write $A$, $M$ in place of $G$, $\Gamma$.

From (\ref{def-AM}) and Lemma \ref{sf-class}, we have the presentations 
 $$ A = \langle \sigma, \tau \colon \sigma^e=\tau^d=1, \tau
\sigma \tau^{-1} = \sigma^k \rangle,   $$  
$$ M = \langle s, t \colon s^\epsilon=t^\delta=1, 
 tst^{-1} = s^\kappa \rangle.  $$

Recall that $g$, $z$, $\gamma$, $\zeta$ were defined in
(\ref{zg-gamma-zeta}).

By \cite[Lemma 4.1]{AB}, $\Aut(A) \cong \Z_g \rtimes \Z_e^\times$ and
is generated by the automorphism $\theta$, and automorphisms $\phi_s$ 
for $s \in \Z_e^\times$ where
$$ \theta(\sigma) = \sigma, \quad \theta(\tau) = \sigma^z \tau; \qquad 
  \phi_s(\sigma) = \sigma^s, \quad \phi_s(\tau)= \tau. $$

We write elements of $\Hol(A)=A \rtimes \Aut(A)$ as $[x,\alpha]$,
where $x \in A$ and $\alpha \in \Aut(A)$. An arbitrary element of
$\Hol(A)$ therefore has the form $[\sigma^u \tau^f, \theta^v \phi_t]$
for $u \in \Z_g$, $f \in \Z_d$, $v \in \Z_g$ and $t \in \Z_e^\times$.
The group operation in $\Hol(A)$, and the action of $\Hol(A)$ on $A$,
are given by
\begin{equation} \label{mul-Hol}
    [x, \alpha] [y, \beta] = [x \alpha(y), \alpha \beta], 
   \qquad [x, \alpha] \cdot y = x\alpha(y).
\end{equation}
We modify the above presentation of $M$. Setting $X=s^\zeta$ and
$Y=ts^\gamma$, we have the alternative presentation
\begin{equation} \label{new-pres}
   \Gamma = \langle X, Y \colon X^\gamma=Y^{\zeta\delta} =1, YXY^{-1}
   = X^\kappa \rangle.
\end{equation}

\begin{proposition}{\cite[Proposition 5.2]{AB-galois}} \label{d-div-gam-del}
If $\Hol(A)$ contains a regular subgroup $M^* \cong M$ then $\gamma
\mid e$. Moreover, if $X$ and $Y$ are generators of $M^*$ satisfying
the relations in (\ref{new-pres}) then the subgroup $\langle X, Y^d
\rangle$ of $M$ of order $e$ acts regularly on the subset  $\{
\sigma^m : m \in \Z\}$ of $A$.
\end{proposition}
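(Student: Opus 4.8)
The plan is to exploit the normal subgroup $\langle\sigma\rangle$ of $A$, which has order $e$, and the block system it induces. Every automorphism of $A$ fixes the $\tau$-exponent modulo $d$ (one checks this for $\theta$ and for each $\phi_s$), so by (\ref{mul-Hol}) the left cosets $\tau^f\langle\sigma\rangle$, $f\in\Z_d$, are permuted by $\Hol(A)$, and reading off the $\tau$-exponent gives a homomorphism $\pi:\Hol(A)\to\Z_d$, $[\sigma^u\tau^f,\alpha]\mapsto f$. I would first record the presentation consequence that $\langle X\rangle$ is the derived subgroup $M'$: from $YXY^{-1}=X^\kappa$ we have $[Y,X]=X^{\kappa-1}$, and since $n$ (hence $\epsilon$) is squarefree one checks from (\ref{zg-gamma-zeta}) that $\gcd(\kappa-1,\gamma)=1$, so $X^{\kappa-1}$ generates $\langle X\rangle$; as $M/\langle X\rangle$ is cyclic this forces $\langle X\rangle=M'$, of order $\gamma$. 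In particular, abelianising (\ref{new-pres}) collapses $X$ and shows $M/M'\cong\Z_{\zeta\delta}$, generated by the image of $Y$.

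Claim 1 ($\gamma\mid e$) then falls out. Writing $M^*$ for the regular copy of $M$ and identifying each generator with its image in $\Hol(A)$, we have $X\in (M^*)'\subseteq\ker\pi$ because the target $\Z_d$ is abelian. Thus $\langle X\rangle$ fixes the block $\langle\sigma\rangle$ setwise. Since $M^*$ acts freely on $A$ by regularity, the cyclic group $\langle X\rangle$ of order $\gamma$ acts freely on the $e$-element set $\langle\sigma\rangle$; all its orbits have size $\gamma$, forcing $\gamma\mid e$.

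For the moreover-part, let $H=\ker\pi\cap M^*$ be the stabiliser in $M^*$ of the block $\langle\sigma\rangle$. As $M^*$ is transitive on $A$ it is transitive on the $d$ blocks, so $\pi|_{M^*}$ is onto $\Z_d$ and $|H|=e$; restricting its action to $\langle\sigma\rangle$ gives a free action of a group of order $e$ on an $e$-element set, hence a regular one. It therefore remains only to identify $H$ with $\langle X,Y^d\rangle$. The inclusion $\langle X,Y^d\rangle\subseteq H$ is immediate, since $X\in(M^*)'\subseteq\ker\pi$ and $\pi(Y^d)=d\,\pi(Y)=0$.

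The order computation giving the reverse inclusion is the step I expect to be the main obstacle. As $\langle X\rangle=M'$ is normal and contained in $\langle X,Y^d\rangle$, we get $|\langle X,Y^d\rangle|=\gamma\cdot r$, where $r$ is the order of the image of $Y^d$ in $M/M'\cong\Z_{\zeta\delta}$. Since the image of $Y$ generates $M/M'$, we have $r=\zeta\delta/\gcd(\zeta\delta,d)$, so $|\langle X,Y^d\rangle|=e$ exactly when $d\mid\zeta\delta$. I would close the argument by deducing this divisibility from Claim 1 together with the standing hypotheses: because $\gamma\mid e$ and $\gcd(d,e)=1$ we have $\gcd(d,\gamma)\mid\gcd(d,e)=1$, and since $d\mid n=\gamma\zeta\delta$ with $n$ squarefree, every prime factor of $d$ must divide $\zeta\delta$, i.e.\ $d\mid\zeta\delta$. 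Then $|\langle X,Y^d\rangle|=\gamma\zeta\delta/d=e=|H|$, so $\langle X,Y^d\rangle=H$ acts regularly on $\{\sigma^m:m\in\Z\}$, as claimed. The delicate points throughout are the squarefreeness bookkeeping (the coprimalities $\gcd(\kappa-1,\gamma)=1$ and $\gcd(d,\gamma)=1$), which is exactly what lets the two abelian quotients $\pi$ and $M\to M/M'$ do their work.
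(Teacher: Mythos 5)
This proposition is imported verbatim from \cite[Proposition 5.2]{AB-galois}, and the present paper gives no proof of it, so there is no internal argument to compare yours against; judged on its own terms, your proof is correct and complete. The two structural observations you make do all the work: first, that every automorphism of $A$ (being a word in $\theta$ and the $\phi_s$) induces the identity on $A/\langle\sigma\rangle\cong\Z_d$, so that $[\sigma^u\tau^f,\alpha]\mapsto f$ really is a homomorphism $\pi:\Hol(A)\to\Z_d$ compatible with the action on the cosets of $\langle\sigma\rangle$; second, that squarefreeness of $\epsilon$ gives $\gcd(\kappa-1,\gamma)=1$ and hence identifies $\langle X\rangle$ with the derived subgroup of $M^*$, of order $\gamma$, placing $X$ in $\ker\pi$. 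Regularity of $M^*$ then makes the $\langle X\rangle$-action on the $e$-element block $\{\sigma^m\}$ free, so $\gamma\mid e$; and the full block stabiliser $H=\ker\pi\cap M^*$ has order $e$ (by transitivity on the $d$ blocks) and acts freely, hence regularly, on that block. Your closing count $|\langle X,Y^d\rangle|=\gamma\cdot\zeta\delta/\gcd(\zeta\delta,d)$ together with the deduction $d\mid\zeta\delta$ from $\gcd(d,\gamma)=1$ and $n$ squarefree correctly forces $\langle X,Y^d\rangle=H$. This is the same style of block-system and Hall-subgroup reasoning that underlies the analysis in \cite{AB-galois}. The one point worth flagging is that the assertion ``every automorphism of $A$ fixes the $\tau$-exponent'' genuinely relies on the explicit generating set for $\Aut(A)$ quoted from \cite[Lemma 4.1]{AB}, not merely on $\langle\sigma\rangle$ being characteristic, since a priori an automorphism could act nontrivially on the cyclic quotient $\Z_d$; you do check this on the generators, so the argument stands.
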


If $\gamma \nmid e$ then Proposition \ref{d-div-gam-del}, together
with the discussion in \S\ref{SB-HGS}, shows that there are no skew
braces $(B,+,\ast)$ with $(B,\ast) \cong M$ and $(B,+) \cong A$. This
already proves the second case of Theorem \ref{thm-braces}.

We can replace $Y$ by some power
$Y^f$ to ensure that $Y$ has the form $Y=[\sigma^u \tau, \theta^v
  \phi_t]$ (so $\tau$ occurs in $Y$ with exponent $1$), but in doing
so we replace $\kappa$ by $\kappa^f$. To allow for this. we consider
the action of the group
$$ \Delta:=\{m \in \Z_\delta^\times : m \equiv 1 \pmod{\gcd(\delta,d)}\} $$
on the set
$$  \K = \{ \kappa^r : r \in \Z_\delta^\times\}.  $$
The index of $\Delta$ in $\Z_\delta^\times$ is
$w=\varphi(\gcd(\delta,d))$. We choose a system $\kappa_1, \ldots,
\kappa_w$ of orbit representatives of $\Delta$ on $\K$. 

\begin{lemma}{\cite[Lemma 5.5]{AB-galois}} \label{h unique}
Let $M^* \cong M$ be a regular subgroup of $\Hol(A)$. 
Then there is a unique $h$ with $1 \leq h \leq w$ such
that $M^* $ is generated by a pair of elements $X$, $Y$ of the form 
\begin{equation} \label{def-XY}
  X=[ \sigma^a, \theta^c], \qquad Y=[\sigma^u \tau,
    \theta^v \phi_{t}] .
\end{equation}
which satisfy the relations
\begin{equation} \label{XYkap-h}
X^\gamma = Y^{\zeta\delta} = 1, \qquad YXY^{-1} = X^{\kappa_h}.
\end{equation}
Indeed, $M^*$ contains exactly $\gamma \varphi(e) w /
\varphi(\delta)$ such pairs of generators.
\end{lemma}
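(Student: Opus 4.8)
The plan is to start from the generators supplied by Proposition \ref{d-div-gam-del} and successively normalise them into the shape (\ref{def-XY}), all the while tracking how the conjugation exponent $\kappa$ transforms. First I would introduce the homomorphism $\rho \colon \Hol(A) \to \Z_d$ recording the exponent of $\tau$ in the $A$-component of an element; using (\ref{mul-Hol}) and the fact that every automorphism in $\Aut(A)$ preserves the $\tau$-exponent modulo $d$ (since $\theta(\tau)=\sigma^z\tau$ and $\phi_s(\tau)=\tau$), one checks that $\rho$ is well defined. As $M^*$ acts regularly on $A$, some element sends $1_A$ to $\tau$, so $\rho|_{M^*}$ is surjective; its kernel is an index-$d$ subgroup $E$ of order $e$, which by Proposition \ref{d-div-gam-del} is $\langle X,Y^d\rangle$ and acts regularly on $\langle\sigma\rangle$. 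A short computation with (\ref{mul-Hol}) shows that any element of $\Hol(A)$ preserving $\langle\sigma\rangle$ has trivial $\tau$-part, so every element of $E$ has $A$-component a power of $\sigma$. Moreover the commutator subgroup $C=[M^*,M^*]$ is cyclic of order $\gamma$ (it equals the $\langle X\rangle$ of (\ref{new-pres})), and since $\Z_d$ is abelian we have $C\subseteq\ker\rho=E$; thus $C$ is intrinsic to $M^*$ and its generators have vanishing $\tau$-exponent.

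The crucial normalisation is that a generator of $C$ has trivial $\phi$-component, i.e.\ is of the form $[\sigma^a,\theta^c]$. Writing such a generator as $[\sigma^a,\theta^c\phi_{t}]$, the relation $YXY^{-1}=X^{\kappa}$ forces, on comparing $\Z_e^\times$-components (on which conjugation in $\Aut(A)$ is trivial), $t\equiv t^{\kappa}$, whence $t^{\kappa-1}\equiv 1\pmod e$; combined with $t^{\gamma}\equiv 1$ coming from $X^\gamma=1$ this gives $\ord_e(t)\mid\gcd(\gamma,\kappa-1)$. Because $\epsilon$ is squarefree and $\gamma=\epsilon/\gcd(\kappa-1,\epsilon)$, the primes dividing $\gamma$ are exactly those $p\mid\epsilon$ with $\kappa\not\equiv 1\pmod p$, so $\gcd(\gamma,\kappa-1)=1$ and hence $t=1$; the same argument applies with any element of $\K$ in place of $\kappa$. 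Thus $X$ is automatically of the required shape. It then remains to adjust $Y$: replacing $Y$ by $X^iY^{l}$ with $l\,\rho(Y)\equiv 1\pmod d$ (possible since $\rho(Y)$ generates $\Z_d$) and $\gcd(l,\zeta\delta)=1$ produces $\rho(Y)=1$, that is, $Y=[\sigma^u\tau,\theta^v\phi_t]$, while replacing the conjugation exponent $\kappa$ by $\kappa^{l}\in\K$.

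For the uniqueness of $h$ I would compare two pairs of the shape (\ref{def-XY}). Any such pair is obtained from a fixed one by $X\mapsto X^{j}$ and $Y\mapsto X^iY^{l}$ with $\gcd(j,\gamma)=1$, $\gcd(l,\zeta\delta)=1$; preserving the $\tau$-exponent $1$ of $Y$ requires $l\equiv 1\pmod d$, and then the conjugation exponent changes from $\kappa'$ to $\kappa'^{\,l}$. The condition $l\equiv 1\pmod d$ gives $l\equiv 1\pmod{\gcd(\delta,d)}$ together with $\gcd(l,\delta)=1$, so $l\bmod\delta$ lies in $\Delta$; conversely every element of $\Delta$ is realised by a suitable $l$ (solve the congruences by CRT). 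Hence the attainable exponents form precisely the $\Delta$-orbit of $\kappa'$ in $\K$, which is therefore an invariant of $M^*$. This orbit meets $\{\kappa_1,\ldots,\kappa_w\}$ in a single representative $\kappa_h$, yielding the unique $h$, and by a final choice of $l$ we may arrange the exponent to be exactly $\kappa_h$.

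To count the pairs I would identify pairs $(X,Y)$ of shape (\ref{def-XY}) satisfying (\ref{XYkap-h}) with the isomorphisms $\psi\colon M_{\kappa_h}\to M^*$, where $M_{\kappa_h}$ is the group presented as in (\ref{new-pres}) but with exponent $\kappa_h$; such generating pairs form a torsor under $\Aut(M^*)$, so there are $|\Aut(M)|=\gamma\varphi(\epsilon)$ of them (by the analogue for $M$ of the description of $\Aut(A)$ recalled above). By the first two paragraphs $\psi(X)$ is automatically of the shape $[\sigma^a,\theta^c]$, so the sole remaining constraint is $\rho(\psi(Y))=1$. A computation with the generators of $\Aut(M_{\kappa_h})$ (the analogues of $\theta$ and $\phi_s$) shows every automorphism sends $Y\mapsto X^iY^{l'}$ with $l'\equiv 1\pmod\delta$; hence under precomposition $\rho(\psi(Y))$ is multiplied by $l'\equiv 1\pmod{\gcd(\delta,d)}$, so its residue modulo $\gcd(\delta,d)$ is constant and, by the existence part, equals $1$, while $\rho(\psi(Y))$ ranges uniformly over the $\varphi(\gcd(\zeta,d))$ residues coprime to $d$ that are $\equiv 1\pmod{\gcd(\delta,d)}$. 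The number of admissible $\psi$ is therefore $\gamma\varphi(\epsilon)/\varphi(\gcd(\zeta,d))$; using $\gamma\mid e$ from Proposition \ref{d-div-gam-del} (so $\gcd(\gamma,d)=1$ and $\gcd(\delta,d)\gcd(\zeta,d)=d$), together with $\varphi(\gamma)\varphi(\zeta)=\varphi(\epsilon)$ and $\varphi(d)\varphi(e)=\varphi(n)=\varphi(\delta)\varphi(\epsilon)$, this rewrites as $\gamma\varphi(e)w/\varphi(\delta)$. The hard part will be the normalisation of the second paragraph — establishing via $\gcd(\gamma,\kappa-1)=1$ that $X$ must have trivial $\phi$-part — and then the precise bookkeeping of how the conjugation exponent and the $\tau$-exponent of $Y$ co-vary, since this single computation simultaneously produces the $\Delta$-orbit invariant forcing a unique $h$ and controls the final count.
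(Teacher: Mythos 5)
Your argument is sound, but note that the paper contains no proof of this lemma to compare against: it is quoted verbatim from \cite[Lemma 5.5]{AB-galois}, and only the surrounding scaffolding (the description of $\Aut(A)$, Proposition \ref{d-div-gam-del}, and the definition of $\Delta$ acting on $\K$) appears here. Judged on its own terms, your reconstruction is correct and its three pillars all hold up: the $\tau$-exponent homomorphism $\rho$ places $[M^*,M^*]=\langle X\rangle$ (of order $\gamma$) inside the subgroup acting on $\langle\sigma\rangle$, and the observation $\gcd(\gamma,\kappa-1)=1$ (valid since $\epsilon$ is squarefree) kills the $\phi$-part of $X$; the attainable conjugation exponents form exactly the $\Delta$-orbit of $\kappa$ in $\K$, because preserving $\rho(Y)=1$ forces $l\equiv 1\pmod{d}$, hence $l\equiv 1 \pmod{\gcd(\delta,d)}$, and conversely every such $l$ is realised by CRT; and the torsor count $\gamma\varphi(\epsilon)/\varphi(\gcd(\zeta,d))=\gamma\varphi(e)w/\varphi(\delta)$ follows from $\varphi(d)\varphi(e)=\varphi(\delta)\varphi(\epsilon)$ and $\gcd(d,\gamma)=1$. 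A polished write-up should make explicit two steps you use silently: first, that in \emph{any} generating pair satisfying (\ref{XYkap-h}) the element $X$ necessarily generates $[M^*,M^*]$ (this needs $\gcd(\kappa_h-1,\gamma)=1$, which holds because $\kappa_h=\kappa^r$ with $\gcd(r,\delta)=1$, together with the fact that $\gcd(\gamma,\zeta\delta)=1$ makes $\langle X\rangle$ the unique subgroup of order $\gamma$); second, that the replacement $Y\mapsto X^iY^{l}$ with $\gcd(l,\zeta\delta)=1$ really satisfies $(X^iY^{l})^{\zeta\delta}=1$, which requires checking that $\sum_{j<\zeta\delta}\kappa^{lj}\equiv 0$ modulo every prime of $\gamma$ — true precisely because $\gcd(l,\delta)=1$ keeps $\kappa^{l}\not\equiv 1$ modulo such primes. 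Neither point is a genuine obstruction, and the distinctive mod-$\gcd(\delta,d)$ versus mod-$\gcd(\zeta,d)$ bookkeeping in your final count is exactly what produces the factor $w$ in the statement.
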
 

The relations in (\ref{XYkap-h}) are the same as those in
(\ref{new-pres}), except that $\kappa$ is replaced by one of the coset
representatives $\kappa_h$.
Lemma \ref{h unique} shows that the regular subgroups of $\Hol(A)$
isomorphic to $M$ fall into $w$ disjoint families $\F_1, \ldots,
\F_w$ corresponding to the orbits of $\Delta$ on $\K$. 

Not every pair $X$, $Y$ satisfying (\ref{def-XY}) and (\ref{XYkap-h})
generate a regular subgroup. To identify those which do, we let
$$ \NN_h  \subset \Z_e^\times \times \Z_e \times \Z_g \times \Z_e \times \Z_g$$  
be the set of quintuples  $(t, a,c,u,v)$
such that the corresponding elements  $X$, $Y$ satisfy  (\ref{XYkap-h}) for the orbit representative
$\kappa_h$ and generate a {\em regular} subgroup 
$M^*=\langle X,Y\rangle \in \F_h$.

It follows from Lemma \ref{h unique} that the number $e'(M,A)$ of
regular subgroups of $\Hol(A)$ isomorphic to $M$ is given by 
$$ e'(M,A) = \sum_{h=1}^w |\F_h| =   \frac{\varphi(\delta)}{\gamma
\varphi(e)w} \sum_{h=1}^w |\NN_h|. $$

We recall from \cite[Lemma 4.3]{AB-galois} a formula for powers of the
element $Y$.

\begin{lemma}  \label{Y-pow}
For $Y$ as in (\ref{def-XY}) and $j\geq 0$, we have 
$$   Y^j = [ \sigma^{A(j)} \tau^j, \theta^{vS(t,j)} \phi_{t^j}], $$ 
where $A(j) = uS(tk,j) + vzkT(k,t,j)$, with 
\begin{equation}  \label{def-S-sum}
     S(m,j)= \sum_{i=0}^{j-1} m^i. 
\end{equation}
and
\begin{equation} \label{def-T-sum}
  T(k,t,j) = \sum_{h=0}^{j-1} S(t,h) k^{h-1} \mbox{ for } j \geq 1,
  \qquad  T(k,t,0)=0.
\end{equation}
\end{lemma}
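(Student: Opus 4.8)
The plan is to prove the formula by induction on $j$, using the semidirect-product multiplication law (\ref{mul-Hol}) in $\Hol(A)$. The base case $j=0$ is immediate: the claimed right-hand side reads $[\sigma^{A(0)}\tau^0, \theta^{vS(t,0)}\phi_{t^0}] = [1,\id]$, since $A(0)=0$, $S(t,0)=0$, and $t^0=1$. It is worth also spelling out $j=1$ as a sanity check, where $S(tk,1)=1$ and $T(k,t,1)=0$ give $A(1)=u$, recovering $Y$ itself.

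For the inductive step I would assume the formula for $Y^j$ and expand $Y^{j+1}=Y^j\cdot Y$ via (\ref{mul-Hol}). This needs two preliminary identities. First, a commutation rule between the two families of automorphisms: checking on the generators $\sigma,\tau$ and using $zg=e$, one finds $\phi_t\theta=\theta^t\phi_t$, and hence $\phi_{t^j}\theta^v=\theta^{vt^j}\phi_{t^j}$. This lets me collect the automorphism factor of $Y^{j+1}$ as $\theta^{v(S(t,j)+t^j)}\phi_{t^{j+1}}=\theta^{vS(t,j+1)}\phi_{t^{j+1}}$, using $S(t,j)+t^j=S(t,j+1)$. Second, writing $\alpha=\theta^{vS(t,j)}\phi_{t^j}$, I would compute $\alpha(\sigma^u\tau)=\sigma^{t^ju+vzS(t,j)}\tau$ from $\theta^v(\tau)=\sigma^{vz}\tau$, $\phi_{t^j}(\sigma)=\sigma^{t^j}$, and the geometric-type identity $(\sigma^{vz}\tau)^n=\sigma^{vzS(k,n)}\tau^n$.

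With these in hand, the element part of $Y^{j+1}$ is $\sigma^{A(j)}\tau^j\cdot\sigma^{t^ju+vzS(t,j)}\tau$. Moving $\tau^j$ to the right through $\tau^j\sigma^a=\sigma^{ak^j}\tau^j$ produces $\tau^{j+1}$ with $\sigma$-exponent $A(j)+(t^ju+vzS(t,j))k^j$. It then remains to check that the recurrence $A(j+1)=A(j)+t^jk^ju+vzS(t,j)k^j$ is consistent with the closed form $A(j)=uS(tk,j)+vzk\,T(k,t,j)$. The $u$-part follows from $S(tk,j+1)-S(tk,j)=(tk)^j=t^jk^j$, and the $vz$-part from $T(k,t,j+1)-T(k,t,j)=S(t,j)k^{j-1}$, giving $vzk[\,T(k,t,j+1)-T(k,t,j)\,]=vzS(t,j)k^j$ as required.

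The hard part will be this last telescoping of $T(k,t,j)$. The summand $S(t,h)k^{h-1}$ at $h=0$ formally involves $k^{-1}$, which is harmless because $S(t,0)=0$ (and in any case $k\in\Z_e^\times$ is invertible), but one must confirm that the difference formula $T(k,t,j+1)-T(k,t,j)=S(t,j)k^{j-1}$ holds for every $j\geq 0$ and is compatible with the boundary value $T(k,t,0)=0$. Everything else is routine bookkeeping, so the substance of the argument lies in establishing the two commutation/action identities and verifying that the increments of the closed forms $S(tk,\cdot)$ and $T(k,t,\cdot)$ reproduce exactly the increment forced by multiplication in $\Hol(A)$.
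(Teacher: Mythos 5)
Your induction is correct and is surely the intended argument: the paper does not reprove this lemma but imports it from \cite[Lemma 4.3]{AB-galois}, and every ingredient you use --- the commutation $\phi_s\theta^c=\theta^{cs}\phi_s$ (which the paper itself uses in the conjugation computations of the next section), the collection of the automorphism part via $S(t,j)+t^j=S(t,j+1)$, and the telescoping $S(tk,j+1)-S(tk,j)=(tk)^j$ and $vzk\bigl(T(k,t,j+1)-T(k,t,j)\bigr)=vzS(t,j)k^j$, including the boundary case where $S(t,0)=0$ neutralises the formal $k^{-1}$ --- checks out and reproduces exactly the increment $A(j+1)=A(j)+u(tk)^j+vzS(t,j)k^j$ forced by the multiplication in $\Hol(A)$. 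One small repair to your justification: the identity needed for $\theta^{vS(t,j)}(\tau)=\sigma^{vzS(t,j)}\tau$ is just $\theta^m(\tau)=\sigma^{mz}\tau$ (since $\theta$ fixes $\sigma$, iterating it adds $z$ to the exponent each time), not the geometric-series identity $(\sigma^{a}\tau)^n=\sigma^{aS(k,n)}\tau^n$, which computes powers inside $A$ and would produce a different exponent if actually invoked at that step; the exponent you wrote down is nonetheless the correct one.
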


 For each prime $q \mid e$ (respectively, $q \mid \epsilon$), we set
 $r_q = \ord_q(k)$ (respectively, $\rho_q=\ord_\epsilon(\kappa)$). We
 then divide the set of primes $q \mid e$ into six subsets (any of
 which may be empty) as follows.
 $$ P = \{\mbox{primes }q \mid \gcd(\gamma,z)  \}; $$
$$  Q = \{\mbox{primes }q \mid \gcd(\zeta \delta,z)  \}; $$
$$  R= \{\mbox{primes }q \mid \gcd(\gamma,g) : \rho_q \neq r_q \}; $$
$$  S =  \{\mbox{primes }q \mid \gcd(\gamma,g) : \rho_q=r_q >2 \}; $$ 
$$  T = \{\mbox{primes }q \mid \gcd(\gamma,g) : \rho_q=r_q=2 \}; $$
$$  U = \{\mbox{primes }q \mid \gcd(\zeta \delta,g)  \}. $$
Moreover, for each $h \in \{1, \ldots, w\}$, we define
$$   S_h^+ = \{ q \in S :  \kappa_h \equiv k \pmod{q} \}, $$
$$  S_h^- = \{ q \in S : \kappa_h \equiv k^{-1} \pmod{q} \}, $$
and set
$$   S_h = S_h^+ \cup S_h^-, \qquad  S'_h = S \backslash S_h. $$ 
We also set
$$  \lambda = z^{-1}(k-1) \in \Z_g^\times.  \qquad 
  \mu = k^{-1} z^{-1}(k-1) \in \Z_g^\times. $$

The following result is \cite[Lemma 6.12]{AB-galois}.

\begin{lemma} \label{quin}
A quintuple $(t,a,c,u,v) \in \Z_e^\times \times \Z_e \times \Z_g \times
  \Z_e \times \Z_g$ belongs to $\NN_h$ if and only if, for each prime
  $q \mid e$, its entries satisfy the conditions mod $q$ shown in
  Table \ref{quintuples}.
\end{lemma}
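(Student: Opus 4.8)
The plan is to convert the three abstract relations in (\ref{XYkap-h}) and the regularity of $M^*=\langle X,Y\rangle$ into explicit congruences on the entries $(t,a,c,u,v)$, and then to exploit the squarefreeness of $e$ to split every such congruence into independent conditions modulo each prime $q\mid e$. Writing $e=zg$ with $z=\gcd(k-1,e)$ coprime to $g$, and using the Chinese Remainder Theorem to decompose $\langle\sigma\rangle\cong\Z_e$, $\Z_e^\times$ and $\Z_g$ as products over the primes $q\mid e$, I would show that membership of $(t,a,c,u,v)$ in $\NN_h$ is equivalent to the conjunction, over all $q\mid e$, of a local condition depending only on the residues of the entries mod $q$. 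Matching these local conditions against the six mutually exclusive classes $P,Q,R,S,T,U$ (refined by $S_h^{\pm}$) then yields Table \ref{quintuples}.

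First I would compute the left-hand sides of (\ref{XYkap-h}) directly from (\ref{mul-Hol}) and Lemma \ref{Y-pow}. Since $\theta$ fixes $\sigma$, one finds $X^{j}=[\sigma^{ja},\theta^{jc}]$, so $X^{\gamma}=1$ is equivalent to $\gamma a\equiv 0\pmod e$ and $\gamma c\equiv 0\pmod g$. A routine conjugation calculation, using $\phi_{t}\theta\phi_{t}^{-1}=\theta^{t}$ and $\tau\sigma^{m}\tau^{-1}=\sigma^{km}$, gives
\begin{equation*}
 YXY^{-1}=[\,\sigma^{\,t(ka-cz)},\ \theta^{\,tc}\,],
\end{equation*}
so $YXY^{-1}=X^{\kappa_{h}}$ becomes $t(ka-cz)\equiv\kappa_{h}a\pmod e$ together with $tc\equiv\kappa_{h}c\pmod g$. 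Finally, applying Lemma \ref{Y-pow} with $j=\zeta\delta$ and recording separately the $\sigma$-, $\tau$-, $\theta$- and $\phi$-parts, $Y^{\zeta\delta}=1$ becomes $d\mid\zeta\delta$, $t^{\zeta\delta}\equiv 1\pmod e$, $A(\zeta\delta)\equiv 0\pmod e$ and $v\,S(t,\zeta\delta)\equiv 0\pmod g$, with $A$, $S$, $T$ as in (\ref{def-S-sum})--(\ref{def-T-sum}). These congruences are the content of the ``relation'' rows of the table, and, once $S(t,j)$ and $T(k,t,j)$ are evaluated mod $q$ according as $q\mid z$ or $q\mid g$, they naturally involve $\lambda=z^{-1}(k-1)$ and $\mu=k^{-1}z^{-1}(k-1)$.

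For regularity, the key observation is that, because $X,Y$ satisfy the relations of $M$, the group $M^{*}$ is a quotient of $M$ and hence $|M^{*}|\le n$; consequently $M^{*}$ is regular on $A$ if and only if it is transitive, and transitivity forces $|M^{*}|=n$ and $M^{*}\cong M$. Since $\langle X\rangle$ is normalised by $Y$, every element of $M^{*}$ has the normal form $X^{i}Y^{j}$, and a short computation gives
\begin{equation*}
 X^{i}Y^{j}\cdot 1 \;=\; \sigma^{\,ia+A(j)+i c z\,S(k,j)}\,\tau^{\,j}.
\end{equation*}
Transitivity is then equivalent to the assertion that, as $i,j$ vary, these elements exhaust $A$: the exponents $j$ must cover $\Z_{d}$, and for each residue of $j$ the corresponding $\sigma$-exponents must cover $\Z_{e}$. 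Here I would invoke Proposition \ref{d-div-gam-del}, which isolates the subgroup $\langle X,Y^{d}\rangle$ of order $e$ and reduces the $\sigma$-part of the problem to its regular action on $\{\sigma^{m}\}$, the remaining $\tau$-cosets being handled by the powers of $Y$.

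After the prime-by-prime split, for each $q\mid e$ the local transitivity condition depends on whether $q\mid z$ or $q\mid g$ (i.e.\ on $\lambda,\mu$ being units mod $q$), on whether $q$ divides $\gamma$ or $\zeta\delta$, and, when $q\mid\gcd(\gamma,g)$, on the comparison of $r_{q}=\ord_{q}(k)$ with $\rho_{q}=\ord_{\epsilon}(\kappa)$. This is exactly what distinguishes $P,Q$ (where $q\mid z$) from $R,S,T,U$ (where $q\mid g$), and what separates $R$ $(\rho_{q}\neq r_{q})$, $S$ $(\rho_{q}=r_{q}>2)$ and $T$ $(\rho_{q}=r_{q}=2)$. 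The main obstacle is the class $S$: there the relation $YXY^{-1}=X^{\kappa_{h}}$ can be satisfied with $Y$ acting on the local $\langle\sigma\rangle$-part either like $\tau$ (conjugation by $k$) or like $\tau^{-1}$ (conjugation by $k^{-1}$), and only one orientation is compatible with regularity for a given generator. Tracking which orientation occurs is precisely the split of $S$ into $S_{h}^{+}$ $(\kappa_{h}\equiv k\pmod q)$ and $S_{h}^{-}$ $(\kappa_{h}\equiv k^{-1}\pmod q)$; when $\rho_{q}=r_{q}=2$ one has $k\equiv k^{-1}\pmod q$ and the two orientations coincide, which is why $T$ needs no such refinement. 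Carrying out this local case analysis and checking that the resulting conditions on $(t,a,c,u,v)$ mod $q$ agree with each row of Table \ref{quintuples} completes the proof.
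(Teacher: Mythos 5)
First, a point of comparison: this paper does not prove Lemma \ref{quin} at all --- it is quoted verbatim from \cite[Lemma 6.12]{AB-galois}, where the underlying argument occupies most of a long section. Your outline correctly reconstructs the overall strategy of that reference (translate the relations (\ref{XYkap-h}) and regularity into congruences, then localise at each prime $q \mid e$ via the Chinese Remainder Theorem), and the explicit computations you do carry out are correct: $X^j=[\sigma^{ja},\theta^{jc}]$, the identity $YXY^{-1}=[\sigma^{t(ka-cz)},\theta^{tc}]$, the four conditions equivalent to $Y^{\zeta\delta}=1$, and the orbit formula $X^iY^j\cdot 1_A=\sigma^{ia+A(j)+icz\,S(k,j)}\tau^j$ all check out, as does the observation that $M^*$ is automatically a quotient of $M$ so that regularity reduces to transitivity.

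Nevertheless there is a genuine gap: the content of the lemma \emph{is} the specific list of congruences in Table \ref{quintuples}, and you never derive them. The group relations only pin down $t$, $a$ and $c$; every entry in the $u$ and $v$ columns (the conditions $u\not\equiv 0$ for $q\in Q$, $v\equiv\mu u$ for $q\in S_h^-$ with $t\equiv\kappa$, $v\not\equiv 0$ and $v\not\equiv\mu u$ for $q\in U$), as well as the dichotomy $a\not\equiv 0$ versus $a\equiv 0$, comes out of the transitivity analysis, which requires evaluating $A(j)$ modulo $q$ in each case (the analogue of Lemma \ref{Adi}) and deciding exactly when the exponents $ia+A(j)+icz\,S(k,j)$ exhaust $\Z_q$ as $i$, $j$ vary. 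That is the bulk of the proof, and it is entirely deferred to ``carrying out this local case analysis.'' In addition, your remark that ``only one orientation is compatible with regularity for a given generator'' does not match the table: for every $q\in R\cup S\cup T$ (and likewise $q \in U$) \emph{both} residues $t\equiv\kappa$ and $t\equiv\kappa k^{-1}$ appear as admissible rows. The actual point of the refinement into $S_h^{+}$ and $S_h^{-}$ is that when $\kappa_h\equiv k^{\pm 1}\pmod q$ one of the two rows degenerates and acquires extra constraints on $(c,v)$ --- namely $c\equiv v\equiv 0$ when $t\equiv 1$, and $v\equiv\mu u$ when $t\equiv\kappa\equiv k^{-1}$ --- and it is exactly these degenerations that later produce the orbit-size discrepancies the skew-brace count depends on.
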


\begin{table}[ht] 
\centerline{ 
\begin{tabular}{|c|c|c|c|c|c|} \hline
  Primes $q$ & $t$ & $a$ & $u$ & $c$ & $v$   \\ \hline 
$ q \in P $ & $ \kappa $ & $ \not \equiv 0 $
  & arb.  & &  \\ \hline  
$ q\in Q $ & $ 1 $ & $0 $ & $ \not \equiv 0$ & & \\  \hline
 $q \in R \cup S_h'$ & $ \kappa $ & $  \not \equiv 0 $ & arb. 
              & $\lambda a$ & arb. \\   
 & $\kappa k^{-1}$ & $ \not \equiv 0 $ &
                        arb. & $ 0 $ & arb.  \\ \hline
$q \in S_h^+$  & $ \kappa k^{-1} \equiv 1
             $ & $ \not \equiv 0 $ & arb. & $ 0 $ & $ 0 $   \\ 
& $\kappa $ & $\not \equiv
                        0 $ & arb. &   $\lambda a$   & arb.  \\  \hline 
$q \in S_h^-$ & $ \kappa $ & $ \not \equiv 0 $
                        & arb.  & $\lambda a$ & $\mu u$ \\  
	 & $\kappa k^{-1} \equiv \kappa^2$ & $\not \equiv 0$ & arb. &
                        $ 0 $ & arb.  \\ \hline  
 $q \in T$ & $\kappa \equiv -1$ & $\not \equiv 0$ 
                   & arb. & $\lambda a$ & $\mu u$  \\    
 & $\kappa k^{-1} \equiv 1$ & $ \not
                        \equiv0 $ & arb. & $0$  & $ 0 $ \\ \hline   
$ q \in U$ & $1$ & $0$ & arb. & $ 0
                        $ & $ \not \equiv 0$   \\  
	& $ k^{-1}$ & $0$ & arb. & $0$ & $\not \equiv \mu u$ 
                        \\ \hline 
\end{tabular}
}  
\vskip3mm

\caption{Conditions for membership of $\NN_h$.} 
 \label{quintuples}  	
\end{table}

In Table \ref{quintuples} and the discussion below, we write $\kappa$ in place of $\kappa_h$ to simplify notation. Moreover, all congruences are modulo the relevant prime $q$ unless otherwise indicated. The entries in Table \ref{quintuples} for $c$, $v$ are blank when $q \mid z$ (so $q \in P \cup
Q$) since $c$, $v$ are only defined mod $g$.

To illustrate how to interpret Table \ref{quintuples}, suppose that $q
\in S_h^+$, so that $\kappa \equiv k$. There are two possibilities for
$t \bmod q$. Either $t \equiv \kappa$, and then $a \not \equiv 0$, $c
\equiv \lambda a$ and $u$ and $v$ maybe chosen arbitrarily mod $q$,
or else $t \equiv 1$, and then $u$ may be chosen arbitrarily but $c
\equiv v \equiv 0$.

Finally, from \cite[Corollary 6.10, Proposition 6.11]{AB-galois} we have the
following congruence information on the exponents $A(j)$ in Lemma
\ref{Y-pow} when $j$ is a multiple of $d$:

\begin{lemma} \label{Adi} 
For $i \geq 0$, we have the following congruences mod $q$.
\begin{itemize}
\item[(i)] If $q \in Q$ then $A(di) \equiv udi$.
\item[(ii)] If $q \in U$ with $t \equiv 1$ then 
$$ A(di) \equiv \frac{vzdi}{k-1}. $$
\item[(iii)] If $q \in U$ with $t \equiv k^{-1}$ then 
$$ A(di) \equiv \frac{zk}{k-1}(v -\mu u) di. $$
\item[(iv)] If $q \mid \gamma$ then $A(di) \equiv 0$ for all $i \equiv
  0 \pmod{\gcd(\delta,e)}$.
\end{itemize}
\end{lemma}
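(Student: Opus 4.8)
The statement is quoted from \cite{AB-galois}, but I would prove it directly from the closed formula $A(j) = uS(tk,j) + vzkT(k,t,j)$ of Lemma \ref{Y-pow}. The plan is to substitute $j = di$, together with the value of $t \bmod q$ forced by Table \ref{quintuples} for the relevant subset of primes, and then evaluate the geometric-type sums $S$ and $T$ modulo $q$. Throughout I would use that $\tau^{di}=1$, so that $Y^{di} = [\sigma^{A(di)}, \ast]$ and $A(di)$ is genuinely the quantity of interest.

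First I would record the two arithmetic inputs that drive every case. Since $r_q = \ord_q(k)$ divides $d = \ord_e(k)$, we have $k^{di} \equiv 1 \pmod q$ for every $q \mid e$ and every $i$. Second, when $\gcd(\delta,e) \mid i$ and $q \mid \gamma$, I claim $\kappa^{di} \equiv 1 \pmod q$: indeed $\rho_q = \ord_q(\kappa)$ divides $\delta$, and since $n$ is squarefree I can split $\rho_q$ into its part dividing $e$ (which then divides $\gcd(\delta,e)$, hence $i$) and its part coprime to $e$ (which divides $d$, hence $di$); as these are coprime, $\rho_q \mid di$. I would also note that $q \mid \gamma$ forces $\kappa \not\equiv 1 \pmod q$, because $\gamma$ and $\zeta = \gcd(\kappa-1,\epsilon)$ are coprime ($\epsilon$ being squarefree), and similarly that $q \in Q$ forces $k \equiv 1$ while $q \in U$ forces $k \not\equiv 1$.

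With these in hand, parts (i)--(iii) are essentially mechanical. For $q \in Q$ one has $q \mid z$, which annihilates the $vzk$-term, while $t \equiv 1$ and $k \equiv 1$ give $tk \equiv 1$ and hence $S(tk,di) \equiv di$, yielding $A(di) \equiv udi$. For (ii) and (iii) we are in the case $q \in U$, so $k \not\equiv 1$. When $t \equiv 1$ (part (ii)) the first term vanishes since $S(k,di) = (k^{di}-1)/(k-1) \equiv 0$, and I would evaluate $T(k,1,di) = \sum_{h=0}^{di-1} h k^{h-1}$ by differentiating the geometric series $\tfrac{d}{dx}\tfrac{x^{di}-1}{x-1}$ at $x=k$ and simplifying with $k^{di}\equiv 1$. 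When $t \equiv k^{-1}$ (part (iii)) we have $tk \equiv 1$, so $S(tk,di) \equiv di$ contributes $udi$ from the first term, and $T(k,k^{-1},di)$ I would compute by reindexing its defining double sum. In each case the result is rewritten using $z$, $\lambda = z^{-1}(k-1)$ and $\mu = k^{-1}z^{-1}(k-1)$ to match the stated form.

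The main obstacle is part (iv), where the difficulty is the divisibility bookkeeping rather than a single clean sum. Here $q \mid \gamma$ splits as $q \in P$ (when $q \mid z$) or $q \in R \cup S \cup T$ (when $q \mid g$), and I would treat these via the corresponding rows of Table \ref{quintuples}. For $q \in P$ the $vzk$-term vanishes and $t \equiv \kappa$ with $k \equiv 1$ gives $S(\kappa,di) = (\kappa^{di}-1)/(\kappa-1) \equiv 0$, using the congruence $\kappa^{di}\equiv 1$ together with $\kappa \not\equiv 1$; for $q \mid g$ I would use $(\kappa k)^{di} = \kappa^{di}k^{di} \equiv 1$ to collapse $S$, with a parallel evaluation of $T$. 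The delicate point is that the vanishing in (iv) relies essentially on the hypothesis $i \equiv 0 \pmod{\gcd(\delta,e)}$ to force $\kappa^{di}\equiv 1$; tracking signs and the interplay between $\lambda$, $\mu$, and the two admissible values of $t$ in each row is where errors are most likely to arise.
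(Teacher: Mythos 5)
Your instinct is right that the paper offers no proof of this lemma at all: it is imported verbatim from \cite{AB-galois} (Corollary 6.10 and Proposition 6.11 there), so a direct verification from the closed formula $A(j)=uS(tk,j)+vzkT(k,t,j)$ of Lemma \ref{Y-pow} is the natural, and here the only available, route. The two arithmetic inputs you isolate are indeed the crux: $r_q\mid d$ gives $k^{di}\equiv 1\pmod q$ for every $q\mid e$, and your squarefree splitting of $\rho_q\mid\delta$ into a part dividing $\gcd(\delta,e)\mid i$ and a part dividing $d$ correctly yields $\kappa^{di}\equiv 1\pmod q$ under the hypothesis of (iv). Parts (i) and (ii) go through exactly as you describe.

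There are, however, two concrete problems with the sketch. First, in (iv) your plan for the primes $q\mid\gcd(\gamma,g)$ is to ``collapse'' $S(tk,di)$ using $(\kappa k)^{di}\equiv 1$; this fails precisely when $tk\equiv 1\pmod q$, which happens for $q\in S_h^-$ with $t\equiv\kappa\equiv k^{-1}$ and for $q\in T$ with $t\equiv\kappa\equiv -1$. In those rows $S(tk,di)=di$, and the first term $u\,di$ is not $0$ modulo $q$ in general (since $q\nmid d$ and $q\nmid\gcd(\delta,e)$, the hypothesis on $i$ gives no divisibility by $q$); the vanishing of $A(di)$ comes only from cancellation against the $T$-term, which requires the constraint $v\equiv\mu u$ that Table \ref{quintuples} imposes in exactly those rows (and likewise $v\equiv 0$ in the rows with $t\equiv\kappa k^{-1}\equiv 1$). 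You flag the $\lambda$, $\mu$ bookkeeping as delicate, but this cancellation is the one genuinely non-mechanical step and your sketch does not identify it. Second, carrying out your own reindexing in (iii) gives $\sum_{h}k^hS(k^{-1},h)=\tfrac{k}{k-1}\bigl(S(k,di)-di\bigr)\equiv-\tfrac{k\,di}{k-1}$, hence $A(di)\equiv u\,di-\tfrac{vzk}{k-1}di=\tfrac{zk}{k-1}(\mu u-v)\,di$, the \emph{negative} of the displayed formula; you should therefore expect not to be able to ``rewrite to match the stated form'' and should either locate a sign error or record that the statement carries a sign typo. The discrepancy is harmless for the application, since the only use made of (iii) is that the coefficient of $i$ is nonzero when $v\not\equiv\mu u$.
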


\section{Counting Skew Braces}

Lemma \ref{quin} enables us to find all regular subgroups of $\Hol(A)$
isomorphic to $M$. Indeed, given $h \in \{1, \ldots w\}$ and a
quintuple $(t,a,c,u,v) \in \NN_h$, we have the corresponding regular
subgroup $M^*=\langle X, Y \rangle$ where $X$, $Y$ are defined in
(\ref{def-XY}). By Lemma \ref{h unique}, every regular subgroup $M^*$
arises this way from $\gamma \varphi(e)w/\varphi(\delta)$ quintuples
in $\NN_h$. The number $b(M,A)$ of skew braces with multiplicative
group $M$ and additive group $A$ is the number of $\Aut(A)$-orbits of
these regular subgroups, where $\Aut(A)$ acts on $\Hol(A)$ by
conjugation.  

Let $I_h(t,a,c,u,v)$ be the size of the orbit of the
subgroup $M^*$. Then $I_h(t,a,c,u,v)$ is the index in $\Aut(A)$ of the
stabiliser of $M^*$. To count the skew braces, we need to count the
subgroup $M^*$ with weight $I_h(t,a,c,u,v)^{-1}$. Thus we have the
following formula for $b(M,A)$:
\begin{equation} \label{count-braces}
  b(M,A) = \frac{\varphi(\delta)}{\gamma \varphi(e) w}
   \sum_{h=1}^w \sum_{(t,a,c,u,v) \in \NN_h} \frac{1}{I_h(t,a,c,u,v)}. 
\end{equation}  

It remains to determine $I_h(t,a,c,u,v)$. An element of
$\Aut(A)$ has the form $\alpha = \theta^r \phi_s$ for $r \in \Z_g$ and $s
\in \Z_e^\times$. 

\begin{lemma}
Suppose that $\gamma \mid e$, let $1 \leq h \leq w$, and let 
$M^* = \langle X,Y \rangle$ be the regular subgroup in $\Hol(A)$
corresponding to $(t,a,c,u,v) \in \NN_h$ as above. Let $\alpha =
\theta^r \phi_s \in \Aut(A)$. Then $\alpha M^* \alpha^{-1}=M^*$ in
$\Hol(A)$ if and only if there exist $i \in \Z_{\zeta \delta/d}$ and
$j \in \Z_ \gamma$ satisfying the following three conditions:
\begin{equation} \label{t-di}
   t^{di} \equiv 1 \pmod{e}; 
\end{equation}
\begin{equation} \label{cong2}
   cj+vS(t,di)  \equiv  v(s-1)-(t-1)r  \pmod{g};
\end{equation}
\begin{equation} \label{cong3}
   aj+A(di) \equiv (u-zv)(s-1)+ztr \pmod{e}.
\end{equation}
Moreover, (\ref{t-di}) is equivalent to 
\begin{equation} \label{i-lin}
  i \equiv 0 \pmod{\gcd(\delta,e)}; 
\end{equation}

\end{lemma}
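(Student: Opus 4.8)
The plan is to conjugate the two generators of $M^*$ by $\alpha=\theta^r\phi_s$ and decide when the conjugates lie back in $M^*$. From the multiplication law (\ref{mul-Hol}), conjugation by $\alpha$ sends $[x,\beta]$ to $[\alpha(x),\alpha\beta\alpha^{-1}]$, and in $\Aut(A)=\langle\theta\rangle\rtimes\langle\phi_s\rangle$ one has $\phi_s\theta\phi_s^{-1}=\theta^s$, $\alpha(\sigma)=\sigma^s$ and $\alpha(\tau)=\sigma^{rz}\tau$. A direct computation then gives $\alpha X\alpha^{-1}=[\sigma^{sa},\theta^{sc}]=X^s$ and $\alpha Y\alpha^{-1}=[\sigma^{su+rz}\tau,\theta^{(1-t)r+sv}\phi_t]$. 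Since $s\in\Z_e^\times$ and $\gamma\mid e$ we have $\gcd(s,\gamma)=1$, so $\langle X^s\rangle=\langle X\rangle$ and $\alpha X\alpha^{-1}\in M^*$ automatically. As conjugation preserves order, $|\alpha M^*\alpha^{-1}|=|M^*|$, and therefore $\alpha M^*\alpha^{-1}=M^*$ if and only if the single element $\alpha Y\alpha^{-1}$ lies in $M^*$. This reduction is the conceptual pivot of the proof.

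Every element of $M^*$ has a unique normal form $X^jY^q$ with $j\in\Z_\gamma$ and $q\in\Z_{\zeta\delta}$. Using $X^p=[\sigma^{pa},\theta^{pc}]$ and the power formula of Lemma \ref{Y-pow}, I would write out the four coordinates (the exponents of $\sigma$ and $\tau$, the power of $\theta$, and the unit labelling $\phi$) of $X^jY^q$ and compare them with those of $\alpha Y\alpha^{-1}$. The $\tau$-exponent forces $q\equiv1\pmod d$, so $q=1+di$ with $i$ ranging over $\Z_{\zeta\delta/d}$; this fixes the stated ranges of $i$ and $j$. The $\phi$-label forces $t^{1+di}\equiv t$, i.e. $t^{di}\equiv1\pmod e$, which is (\ref{t-di}). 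The $\theta$-exponent gives $jc+vS(t,1+di)\equiv(1-t)r+sv\pmod g$, which simplifies to (\ref{cong2}) upon using $S(t,1+di)\equiv S(t,di)+1\pmod g$, valid because $t^{di}\equiv1$.

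It then remains to match the $\sigma$-exponent, namely $ja+A(1+di)+jcz\equiv su+rz\pmod e$. Writing $Y^{1+di}=Y^{di}Y$ and using $d\mid di$ and $t^{di}\equiv1$ yields the recursion $A(1+di)\equiv A(di)+u+vzS(t,di)\pmod e$; substituting this and then replacing $jcz+vzS(t,di)$ by the value forced by (\ref{cong2}) scaled by $z$ (legitimate since $zg=e$), the $\sigma$-condition collapses exactly to (\ref{cong3}). Hence, granting (\ref{cong2}), the raw $\sigma$-matching is equivalent to (\ref{cong3}), and the three displayed congruences together are necessary and sufficient for $\alpha Y\alpha^{-1}\in M^*$.

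For the ``Moreover'', I would prove (\ref{t-di}) $\Leftrightarrow$ (\ref{i-lin}) by reducing $t^{di}\equiv1\pmod e$ through the CRT to the conditions $\ord_q(t)\mid di$ for each prime $q\mid e$, and evaluating $\ord_q(t)$ from the admissible reductions of $t$ in Table \ref{quintuples}. For $q\mid\gcd(\delta,e)$ (necessarily in class $Q$ or $U$) the table gives $t\equiv1$ or $t\equiv k^{-1}$, so $\ord_q(t)\mid r_q\mid d$ and, since $d\mid di$, such primes impose no condition on $i$. For $q\mid\gamma$ the table gives $t$ equal to $\kappa$ times a power of $k$ of order dividing $d$; writing $\Z_q^\times$ as the product of its $d$-part and prime-to-$d$ part, the factor of $k$ affects only the $d$-part, so $\ord_q(t)\mid di$ reduces to $\bigl(\rho_q/\gcd(\rho_q,d)\bigr)\mid i$, where $\rho_q=\ord_q(\kappa)$. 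Taking the lcm over all $q\mid\gamma$ and using $\lcm_{q\mid\gamma}\rho_q=\delta$ together with the squarefreeness of $n$, the combined modulus is $\delta/\gcd(\delta,d)$, which equals $\gcd(\delta,e)$ because the primes dividing $\delta$ but not $d$ are exactly those dividing both $\delta$ and $e$. I expect this order bookkeeping to be the main obstacle: the rest is a careful but routine computation in $\Hol(A)$, whereas this step is the only place where the fine prime-by-prime data inherited from \cite{AB-galois} is genuinely needed.
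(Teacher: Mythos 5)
Your proposal is correct and follows essentially the same route as the paper: conjugate the generators, reduce to the single condition $\alpha Y\alpha^{-1}\in M^*$, and match coordinates in $\Hol(A)$ against a normal form $X^jY^{1+di}$ (the paper equivalently matches $(\alpha Y\alpha^{-1})Y^{-1}$ against $X^jY^{di}$, using Proposition \ref{d-div-gam-del} where you instead read off $d\mid q-1$ from the $\tau$-exponent). Your prime-by-prime verification of (\ref{t-di})$\Leftrightarrow$(\ref{i-lin}) via $\ord_q(\kappa)$ and $\delta/\gcd(\delta,d)=\gcd(\delta,e)$ is the same order computation the paper performs globally through $\kappa^{di}\equiv 1\pmod{\epsilon}$.
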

\begin{proof}
Using (\ref{mul-Hol}) to calculate in $\Hol(A)$, we have
\begin{eqnarray*}
 \alpha X \alpha^{-1} & = & [1, \theta^r \phi_s] [\sigma^a,\theta^c] [
   1, \phi_s^{-1} \theta^{-r}] \\ 
  & = & [\sigma^{as} , \theta^r \phi_s \theta^c \phi_s^{-1} \theta^{-r}
 ] \\
 & = & [\sigma^{as}, \theta^{r+cs-r}] \\
 & = & X^s, 
\end{eqnarray*}
and
\begin{eqnarray*}
 \alpha Y \alpha^{-1} & = & [1, \theta^r \phi_s] [\sigma^u \tau,
   \theta^v \phi_t] [ 1, \phi_s^{-1} \theta^{-r}] \\  
   & = & [\sigma^{us+rz} \tau, \theta^r \phi_s \theta^v \phi_t
   \phi_s^{-1} \theta^{-r}] \\
 & = & [\sigma^{us+rz} \tau, \theta^{r+vs}\phi_t \theta^{-r}] \\
 & = & [\sigma^{us+rz} \tau, \theta^{r+vs-rt}\phi_t].
\end{eqnarray*}
Also 
$$ Y^{-1} = [ \phi_t^{-1} \theta^{-v}(\tau^{-1} \sigma^{-u}),
  \phi_t^{-1} \theta^{-v}], $$
so that 
\begin{eqnarray*} 
 (\alpha Y \alpha^{-1})Y^{-1} & = & 
  [\sigma^{us+rz} \tau, \theta^{r+vs-rt}\phi_t] 
 [ \phi_t^{-1} \theta^{-v}(\tau^{-1} \sigma^{-u}),
  \phi_t^{-1} \theta^{-v}] \\
  & = & [\sigma^{us+rz} \tau  \cdot  \theta^{r+vs-rt-v}(\tau^{-1}
   \sigma^{-u}), \theta^{r+vs-rt-v}]  \\
  & = & [\sigma^{us+rz} \tau  \cdot \tau^{-1} \sigma^{-z(r+vs-rt-v)-u}, 
            \theta^{r+vs-rt-v}]  
\end{eqnarray*}
Thus
\begin{equation} \label{aycomm}
  (\alpha Y \alpha^{-1})Y^{-1}  =  
    [\sigma^{(u-zv)(s-1)+ztr]}, \theta^{v(s-1)-r(t-1)}]. 
\end{equation}
We then have 
\begin{eqnarray*}
 \alpha M^* \alpha^{-1}=M^* & \LRA & \langle X^s, \alpha Y
 \alpha^{-1} \rangle = \langle X, Y \rangle \\
  & \LRA & \langle X, \alpha Y  \alpha^{-1} \rangle = \langle X, Y \rangle \\
  & \LRA & (\alpha Y  \alpha^{-1}) Y^{-1} \in \langle X, Y \rangle.
\end{eqnarray*}
But $(\alpha Y \alpha^{-1})Y^{-1}$ does not involve $\tau$, so this
element takes $1_A$ to $\sigma^m$ for some $\sigma \in \Z$. Since
$M^*$ is regular on $A$, it follows from Proposition
\ref{d-div-gam-del} that
$$  \alpha M^* \alpha^{-1}=M^*  \LRA 
   (\alpha Y  \alpha^{-1}) Y^{-1} \in \langle X, Y^d \rangle. $$
Thus $\alpha M^* \alpha^{-1} = M^*$ if and only if there exist
 $i \in \Z_{\zeta \delta/d}$ and $j \in \Z_\gamma$ such that
$$  (\alpha Y \alpha^{-1})Y^{-1}  = X^j Y^{di} =
[\sigma^{aj+A(di)}, \theta^{cj+vS(t,di)} \phi_{t^{di}}], $$ 
where the last equality comes from Lemma \ref{Y-pow}.  
Comparing with (\ref{aycomm}), this is equivalent to (\ref{t-di}),
(\ref{cong2}) and (\ref{cong3}).

We show that (\ref{t-di}) is equivalent to (\ref{i-lin}).  Suppose
(\ref{t-di}) holds. Since $k^d \equiv 1 \pmod{e}$, it follows from the
values of $t$ shown in Table \ref{quintuples} that $\kappa^{di} \equiv
1 \pmod{\gamma}$. As $\kappa \equiv 1 \pmod{\zeta}$, we then have
$\kappa^{di} \equiv 1 \pmod{\epsilon}$, so $\delta \mid di$. Thus
$p \mid i$ for every prime $p \mid \delta$ with $p \nmid d$. This
implies (\ref{i-lin}).  Conversely, if (\ref{i-lin}) holds then
$\kappa^{di} \equiv 1 \pmod{\epsilon}$. Since also $k^{di} \equiv 1
\pmod{e}$, it follows from Table \ref{quintuples} that (\ref{t-di})
holds.
\end{proof}

To calculate $I_h(t,a,c,u,v)$, we must find the proportion of pairs
$r$, $s$ for which (\ref{cong2})--(\ref{i-lin}) can be solved for $i$
and $j$. If (\ref{i-lin}) holds then, at each prime $q \mid e$,
(\ref{cong2}) and (\ref{cong3}) reduce to congruences which are linear
in $i$ (as well as in $j$, $r$ and $s$). Indeed, using Lemma
\ref{Adi}, we can replace $A(di)$ by a multiple of $di$ (by $0$
if $q \mid \gamma$). Also, as $t^{di} \equiv 1$, we have $S(t,di)
\equiv 0$ if $t \not \equiv 1$ and $S(t,di) \equiv di$ if $t \equiv
1$. By the Chinese Remainder Theorem, this linearity, together with
the obvious linearity of (\ref{i-lin}), means that solutions to
(\ref{cong2})--(\ref{i-lin}) exist if and only if solutions exist mod $q$
for each prime $q \mid e$, and that the existence of solutions mod $q$
depends only on the residue classes of $r$, $s$ mod $q$. Thus we can
decompose $I_h(t,a,c,u,v)$ into contributions for each $q$:
$$ I_h(t,a,c,u,v) = \prod_{q \mid e} I_q. $$
(We suppress the dependence of $I_q$ on $h$ and $(t,a,c,u,v)$ from the
notation.) 

In order to calculate the $I_q$, we need to subdivide the set of primes
$q \mid e$ more finely than we did in Lemma \ref{quin}. We define
$$ Q' = \{ \mbox{primes } q \mid \gcd(\delta,z) \}, $$
$$ Q'' = \{ \mbox{primes } q \mid \gcd(\zeta,z) \}, $$
$$ U' = \{\mbox{primes } q \mid \gcd(\delta,g) \}, $$
$$ U'' = \{\mbox{primes } q \mid \gcd(\zeta,g) \}, $$
$$ S_{h,1}^+= \{q \in S_h^+ : t \equiv 1 \}, $$
$$ S_{h,2}^+= \{q \in S_h^+ : t \equiv \kappa \}, $$
$$ S_{h,1}^-= \{q \in S_h^- : t \equiv \kappa \}, $$
$$ S_{h,2}^-= \{q \in S_h^- : t \equiv \kappa k^{-1} \}. $$
Thus we have (disjoint) unions

$$ Q = Q' \cup Q'', \qquad U=U' \cup U'', \qquad 
  S_h^+=S^+_{h,1} \cup S^+_{h,2}, \qquad 
  S_h^-=S^-_{h,1} \cup  S^-_{h,2}. $$

\begin{lemma}
Let $M^*$ correspond to $(t,a,c,u,v) \in \NN_h$ as before. For each
prime $q \mid e$, the $q$-part $I_q$ of the index of the stabiliser of
$M^*$ in $\Aut(A)$ is as shown in Table \ref{Iq}. (For ease of
reference, we repeat in Table \ref{Iq} the possible values of $t$,
$a$, $c$, $u$, $v$ mod $q$ as given in Table \ref{quintuples}. We
also show the number $N_q$ of such quintuples mod $q$.)
\end{lemma}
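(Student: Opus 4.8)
The plan is to compute, for each prime $q \mid e$, the quantity $I_q$ as the index in the $q$-part of $\Aut(A)$ of the subgroup of pairs $(r,s)$ for which the congruences (\ref{cong2})--(\ref{i-lin}) admit a solution in $(i,j)$. Concretely, I would first fix $q$ and determine the local shape of $\Aut(A)$ at $q$: the factor $\theta^r$ contributes a variable $r$ that is nontrivial mod $q$ only when $q \mid g$ (since $r \in \Z_g$), while $\phi_s$ contributes $s \in \Z_q^\times$, so the pair $(r,s)$ ranges over a set whose size is $q$ or $q-1$ or $q(q-1)$ depending on whether $q \mid z$ or $q \mid g$. The index $I_q$ is then the reciprocal of the proportion of such $(r,s)$ for which the system is solvable, i.e. $I_q = (\text{total }(r,s))/(\text{solvable }(r,s))$.

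The core of the argument is a case analysis over the refined partition of primes $q \mid e$ into the classes $P$, $Q'$, $Q''$, $R$, $S'_h$, $S^+_{h,1}$, $S^+_{h,2}$, $S^-_{h,1}$, $S^-_{h,2}$, $T$, $U'$, $U''$. For each class I would read off from Table \ref{quintuples} the forced values of $t,a,c,u,v$ mod $q$, substitute $A(di)$ by the linear-in-$di$ expression supplied by Lemma \ref{Adi} (or by $0$ when $q \mid \gamma$), and replace $S(t,di)$ by $0$ when $t \not\equiv 1$ and by $di$ when $t \equiv 1$, as justified in the paragraph preceding the statement. This turns (\ref{cong2}) and (\ref{cong3}) into honest linear congruences in the unknowns $i,j$ with coefficients depending on the fixed quintuple and on the variables $r,s$. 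I would then determine, by linear algebra over $\Z_q$ (tracking separately whether $q \mid z$, so that $c,v$ drop out, or $q \mid g$, so that the $j$-variable lives in $\Z_\gamma$ and is active mod $q$), precisely which $(r,s)$ make the system consistent; the number of solvable $(r,s)$ gives $I_q$, and counting the admissible quintuples mod $q$ gives $N_q$.

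The main obstacle will be the primes in $S$, especially the split classes $S^\pm_{h,1}$, $S^\pm_{h,2}$ and the class $T$, where $q$ divides both $\gamma$ and $g$ and there are genuinely two branches for $t$ mod $q$. In these cases the interaction between the $\theta$-variable $r$ and the $\phi$-variable $s$ is nontrivial: the term $ztr$ in (\ref{cong3}) and the terms $v(s-1)-r(t-1)$ in (\ref{cong2}) couple $r$ and $s$ through the value of $t$, and one must carefully use the relations $c \equiv \lambda a$, $v \equiv \mu u$ (with $\lambda = z^{-1}(k-1)$, $\mu = k^{-1}z^{-1}(k-1)$) to decide consistency. For $q \in T$, where $\kappa \equiv -1$ and $\rho_q = r_q = 2$, the branch $t \equiv \kappa \equiv -1$ forces extra vanishing conditions, and I expect these to be the delicate entries of the table. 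The verification that the stated $I_q$ is indeed the index, rather than merely a count of solvable $(r,s)$, requires checking in each branch that the solvable pairs form a subgroup (equivalently, that solvability is governed by linear conditions on $(r,s)$), which follows from the linearity established above.

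Once all $I_q$ and $N_q$ are tabulated, the lemma is proved by assembling the per-prime data via the Chinese Remainder Theorem, exactly as in the decomposition $I_h(t,a,c,u,v) = \prod_{q \mid e} I_q$ already derived. I would present the conclusion as a verification that each row of Table \ref{Iq} matches the local computation for the corresponding prime class, deferring the routine linear-congruence bookkeeping in the easy classes ($P$, $Q'$, $Q''$, $U'$, $U''$, $R$, $S'_h$) and spelling out in more detail only the coupled classes ($S^\pm_{h,i}$ and $T$) where the obstacle lies.
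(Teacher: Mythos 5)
Your proposal follows essentially the same route as the paper: for each prime $q \mid e$ one reduces (\ref{cong2})--(\ref{i-lin}) to linear congruences mod $q$ using Lemma \ref{Adi} and the substitutions $S(t,di)\equiv 0$ or $di$, counts the pairs $(r,s)$ for which the system is solvable in $(i,j)$, and assembles the local indices via the Chinese Remainder Theorem, with the only delicate work occurring in the classes where $q \mid \gcd(\gamma,g)$. The one small slip --- that the set of relevant pairs $(r,s)$ mod $q$ could have size $q$ --- is harmless, since $\gcd(z,g)=1$ forces this count to be $q-1$ (when $q \mid z$, where $r$ is undefined mod $q$) or $q(q-1)$ (when $q \mid g$).
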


\begin{table}[ht] 
\centerline{ 
\begin{tabular}{|c|c|c|c|c|c|c|c|} \hline
 Primes $q$ & $t$ & $a$ & $u$ & $c$ & $v$ & Index $I_q$ & Number $N_q$
 \\ \hline  
  $q \in P$ & $\kappa$ & $\not \equiv 0$ & arb. & & &
 $1$ & $q(q-1)$ \\ \hline
  $q \in Q'$ & $1$ & $0$ & $\not \equiv 0$ & & & $q-1$ &$q-1$ \\ \hline
   $q \in Q''$ & $1$ & $0$ & $\not \equiv 0$ & &  & $1$ &
 $q-1$ \\ \hline  
 $q \in R \cup S_h'$ & $\kappa$ & $\not \equiv 0$ & arb. &
$\lambda a$ & arb. & $q$ & $2q^2(q-1)$ \\   
       	& $\kappa k^{-1}$ & $\not \equiv 0$ & arb. & $0$ & arb. & &  \\ \hline
  $q \in S_{h,1}^+$ & $\kappa k^{-1} \equiv 1$ & $\not \equiv 0$ & arb. &
 $0$ & $0$ & $1$ & $q(q-1)$ \\ \hline 
 $q \in S_{h,2}^+$ & $\kappa$ & $\not \equiv 0$ &
 arb. & $\lambda a$ & arb. & $q$ & $q^2(q-1)$ \\ \hline
$q \in S_{h,1}^-$ & $\kappa$  & $\not \equiv 0$ &
 arb. &  $\lambda a$ & $\mu u$ & $1$  & $q(q-1)$ \\ \hline 
$q \in S_{h,2}^-$ & $\kappa k^{-1}$ & $ \not
 \equiv 0 $ & arb. & $ 0 $ &  arb. & $q$ & $q^2(q-1)$ \\ \hline   
 $q \in T$ 
  & $\kappa \equiv -1$ & $\not \equiv 0$ & arb. & $\lambda a$
 & $\mu a$ & $1$ & $2q(q-1)$ \\  
 & $\kappa k^{-1} \equiv 1$ & $\not \equiv 0$ & arb. & $0$ & $0$  &
   &  \\ \hline 
  $q \in U'$ & $1$ & $0$ & arb. & $0$ & $\not \equiv 0$ &
 $q(q-1)$ &  $2q(q-1)$ \\
 & $k^{-1}$ & $0$ & arb. & $0$ & $\not \equiv \mu u$ &  & \\ \hline 
$q \in U''$  & $1$ & $0$ & arb. & $0$ &
$\not \equiv 0$ & $q$ & $2q(q-1)$ \\ 
  & $k^{-1}$ & $0$ & arb. & $0$ & $\not \equiv \mu u$ &  & 
 \\ \hline  
\end{tabular}
}  
\vskip3mm

\caption{$q$-parts of index of stabiliser and number of quintuples.}
\label{Iq}
\end{table}

\begin{proof}
We suppose that (\ref{i-lin}) holds. Then, by Lemma \ref{Adi}(iv),
$A(di) \equiv 0 \pmod{q}$ for each $q \mid \gamma$. To find $I_q$
for a given $q$, we divide the number of pairs $r \in \Z_q$, $s \in
\Z_q^\times$ by the number of such pairs for which (\ref{cong2}),
(\ref{cong3}) can be solved mod $q$ for $i$, $j$ with $i$ also
satisfying (\ref{i-lin}). We omit any of $r$, $i$, $j$ which are not
defined mod $q$. We distinguish four cases.

\noindent (i) Suppose $q \mid z$. Then (\ref{cong2}) gives no
condition at $q$ and (\ref{cong3}) becomes
\begin{equation} \label{c3z}
 aj+A(di) \equiv u (s-1).
\end{equation}
Also, $r$ is not determined mod $q$. 

If $q \mid \gamma$, so $q \in P$, then $a \not \equiv 0$ and $A(di)
\equiv 0$ for any choice of $i$ mod $q$. We can choose $s$ arbitrarily
and solve (\ref{c3z}) for $j$. Thus the existence of solutions $i$,
$j$ mod $q$ imposes no restriction on $s$, and $I_q=1$ for $q \in P$.

If $q \mid \zeta \delta$, so $q \in Q$, then $t \equiv 1$, $a \equiv
0$ and $u \not \equiv 0$, and $A(di) \equiv udi$. Then (\ref{c3z})
becomes $di \equiv s-1$. When $q \mid \delta$, so $q \in Q'$, we
have $i \equiv 0$ by (\ref{i-lin}) so $s \equiv 1$. Thus only one of
the $q-1$ possibilities for $s \in \Z_q^\times$ can occur, and
$I_q=q-1$ for $q \in Q'$. If $q \mid \zeta$, so $q \in Q''$, there is
no restriction on $i$ from (\ref{i-lin}), so we may choose $s$
arbitrarily and solve for $i$. Thus $I_q=1$ for $q \in Q''$.

\noindent (ii) If $q \mid \gcd(\gamma,g)$, so that $q \in R \cup S \cup T$, then
$a \not \equiv 0$ and $A(di) \equiv 0$. We consider two subcases.

\noindent (a) If $t \equiv \kappa$ then $c \equiv \lambda a \not \equiv 0$ and
$S(t,di) \equiv 0$. Thus (\ref{cong2}) and (\ref{cong3}) become 
\begin{equation} \label{t-kap1}
  \lambda a j \equiv v(s-1) -(t-1)r, 
\end{equation} 
\begin{equation} \label{t-kap2}
  \qquad aj \equiv (u-zv)(s-1) +ztr, 
\end{equation}
so that 
$$ \lambda(u-zv)(s-1)+\lambda ztr \equiv \lambda aj \equiv
v(s-1)-(t-1)r. $$
Using $1+\lambda z =k$, this simplifies to
\begin{equation} \label{t-kap}
 (kv-\lambda u)(s-1) \equiv (kt-1)r.
 \end{equation} 
It suffices to determine when (\ref{t-kap2}) and (\ref{t-kap}) have
  solutions. 

If $t \equiv \kappa \not \equiv k^{-1}$, we may choose $s$
arbitrarily, and then $r$ (and $j$) are determined. Thus $I_q=q$. This
accounts for $q \in R \cup S_h'$ with $t \equiv \kappa$, and also for
$q \in S_{h,2}^+$.

If however $t \equiv \kappa \equiv k^{-1}$, so either $q \in
S_{h,1}^-$ or $q \in T$ with $t \equiv -1$, then $v \equiv \mu u$,
and, since $k \mu = \lambda$, (\ref{t-kap}) gives no condition on $r$
and $s$. We may then choose $r$, $s$ arbitrarily and solve
(\ref{t-kap2}) for $j$, so $I_q=1$ in these cases.

\noindent (b) If $t \equiv \kappa k^{-1}$ then $c \equiv 0$ and 
(\ref{cong2}) becomes
\begin{equation} \label{t-kap-kinv}
  v S(t,di) \equiv v(s-1) - (t-1)r,
\end{equation}
while (\ref{cong3}) again simplifies to (\ref{t-kap2}).  

If $t \equiv \kappa k^{-1}\not \equiv 1$, then $S(t,di) \equiv 0$ and
(\ref{t-kap-kinv}) determines $r$ once $s$ is chosen. We can then
solve (\ref{t-kap2}) for $j$. Thus $I_q=q$. This accounts for the $q
\in R \cup S_h'$ with $t \equiv \kappa k^{-1}$ and also for $q \in
S_{h,2}^-$.

If however $t \equiv \kappa k^{-1} \equiv 1$, so $q \in S_{h,1}^+$ or
$q \in T$ with $t \equiv 1$, then $v=0$, so (\ref{t-kap-kinv}) gives
no restriction on $r$ and $s$, and (\ref{t-kap2}) can be solved for
$j$. Hence $I_q=1$. 

\noindent (iii) Let $q \mid g$ and $q \mid \delta$, so $q \in U'$. Then $i
\equiv 0$ by (\ref{i-lin}), and $a \equiv c \equiv 0$.
Thus (\ref{cong2}) and (\ref{cong3}) become
\begin{equation} \label{g-del-cong}
   v(s-1) \equiv (t-1)r, \qquad (u-zv)(s-1) + ztr \equiv 0. 
\end{equation}
We have two possibilities for $t$. If $t \equiv 1$ then 
$v \not \equiv 0$. Then the first congruence of (\ref{g-del-cong})
forces $s \equiv 1$ and the second $r \equiv 0$. If $t \equiv
k^{-1}$, we have $v \not \equiv \mu u$. Eliminating $r$ between the
two congruences of (\ref{g-del-cong}) and simplifying, we obtain
$$ (v - \mu u) (s-1) \equiv 0, $$
so that again $s \equiv 1$ and $r \equiv 0$. Thus, in both cases, we
get $I_q=q(q-1)$.

\noindent (iv) Let $q \mid g$ and $q \mid \zeta$, so $q \in U''$. Then
$a \equiv c \equiv 0$, and (\ref{cong2}) and (\ref{cong3}) become
\begin{equation} \label{g-zet-cong}
   vS(t,di) \equiv v(s-1) + (t-1)r, \qquad A(di) \equiv (u-zv)(s-1) + ztr. 
\end{equation} 
Again, there are two possibilities for $t$. If $t \equiv 1$ then 
$v \not \equiv 0$, $S(t,di) \equiv di$ and 
$A(di) \equiv vzdi/(k-1)$. Thus we have 
$$ vdi \equiv v(s-1), \qquad \frac{vzdi}{k-1} \equiv
(u-zv)(s-1)+zr. $$
Given $s$, we may solve the first of these for $i$, and $r$ is 
determined by second. Thus $I_q=q$. If $t \equiv
k^{-1}$ then $v \not \equiv \mu u$, and we have
$$ v(s-1)+(t-1)r \equiv 0, \qquad A(di) \equiv (u-zv)(s-1)+ztr. $$
Again, we may choose $s$ and the first congruence determines
$r$. The second can then be solved for $i$. Thus again $I_q=q$.   
\end{proof}

In order to sum over all quintuples $(t,a,c,u,v) \in \NN_h$, we must
allow for the fact that the partition of $S_h^+$ as $S_{h,1}^+ \cup
S_{h,2}^+$ depends on the choice of quintuple: in choosing a
quintuple, we must in particular choose which primes $q \in
S_h^+$ to allocate to $S_{h,1}^+$, so that $t \equiv 1 \pmod{q}$, and
which to allocate to  $S_{h,2}^+$, so that $t \equiv \kappa k^{-1}
\pmod{q}$. Similarly, we must choose how to allocate the primes $q
\in S_h^-$ between $S_{h,1}^-$ and $S_{h,2}^-$. The remaining sets of
primes listed in Table \ref{Iq}, viz.~$P$, $Q'$, $Q''$, $R$, $S_h'$,
$T$, $U'$, $U''$, are all independent of the choice of quintuple.

Let $I \subseteq S_h^+$ and $J \subseteq S_h^+$. The
number $N_h(I,J)$ of quintuples $(t,a,c,u,v) \in \NN_h$ with
$S_{h,1}^+=I$ and $S_{h,1}^-=J$ is then given by 
$$ N_h(I,J) = \prod_{q \mid e} N_q, $$
where the $N_q$ are as in Table \ref{Iq}, and with $N_q=q(q-1)$
 if $q \in I$ or $q \in J$, and $N_q=q^2(q-1)$ if $q \in S_h^+
\backslash I$ or $q \in S_h^- \backslash J$. For all these quintuples,
$I_h(t,a,c,u,v)$ takes the same value. Denoting this value by  
by $I_h(I,J)$, we may rewrite (\ref{count-braces}) as  
\begin{equation} \label{set-count-braces}
  b(M,A) = \frac{\varphi(\delta)}{\gamma \varphi(e) w} \;
\sum_{h=1}^w \sum_{I,J} \frac{N_h(I,J)}{I_h(I,J)},
\end{equation}
where 
\begin{eqnarray*} 
\frac{N_h(I,J)}{I_h(I,J)} & = & \prod_{q \mid e} \frac{N_q}{I_q} \\
   & = & \left( \prod_{q \in P} q(q-1) \right)  \left( \prod_{q \in
  Q''} (q-1) \right)  
             \left( \prod_{q \in R \cup S'_h \cup T} 2q(q-1) \right) \\
     & &   \quad \times \left( \prod_{q \in  S_h^+ \cup S_h^-} q(q-1) \right)   
                  \left( \prod_{q \in U'} 2 \right)
                 \left( \prod_{q \in U''} 2(q-1) \right)    \\ 
     & = &    \left( \prod_{q \in P \cup R \cup S \cup T} q(q-1) \right)
              \left( \prod_{q \in Q'' \cup U''} (q-1) \right)        
              \left( \prod_{q \in R \cup Sh' \cup T \cup U} 2 \right)   \\  
    & = &  \left( \prod_{q \mid \gamma} q(q-1) \right)
              \left( \prod_{q \mid \gcd(\zeta,e)} (q-1) \right)        
              \left[\left( \prod_{q \mid g} 2 \right) \left( \prod_{q
                  \in S_h^+ \cup S_h'} \right) \right]   \\   
    & = & (\gamma \varphi(\gamma)) \varphi(\gcd(\zeta,e)) \left[
                2^{\omega(g)} 2^{-|S_h^+ \cup S_h^-|} \right]. 
\end{eqnarray*}
This expression is independent of $I$ and $J$ since $N_q/I_q = q(q-1)$
for all $q \in S_h^+ \cup S_h^-$. As there are $2^{|S_h^+|}$ possible
sets $I$, and $2^{|S_h^-|}$ possible sets $J$, summing over $I$ and
$J$ introduces a factor $2^{|S_h^+\cup S_h^-|}$. Thus we have
\begin{eqnarray*}
  b(M,A) & = & \frac{\varphi(\delta)}{\gamma \varphi(e) w} \;
\sum_{h=1}^w  \gamma \varphi(\gamma) \varphi(\gcd(\zeta,e))  2^{\omega(g)} \\
     & = & \frac{\varphi(\delta)}{\gamma \varphi(e) w} \cdot w \cdot 
       \gamma \varphi(\gamma) \varphi(\gcd(\zeta,e))  2^{\omega(g)} .
\end{eqnarray*}
Since $\gamma \mid e$, we have
$$ \varphi(e) =\varphi(\gamma) \varphi(\gcd(\zeta,e))
\varphi(\gcd(\delta),e), $$ 
so the previous expression simplifies to
\begin{eqnarray*}
 b(M,A) & = & \frac{2^{\omega(g)} \varphi(\delta)}{\varphi(\gcd(\delta,e))} \\
    & =  & \frac{2^{\omega(g)} \varphi(\gcd(\delta,e)
   \varphi(\gcd(\delta),d)}{\varphi(\gcd(\delta,e))} \\ 
  & = & 2^{\omega(g)} w.
\end{eqnarray*} 
This completes the proof of Theorem \ref{thm-braces}.

\section{An example where $n$ has $7$ prime factors}

In \cite[\S8]{AB-galois}, we considered four groups of 
order 
$$ n = 2 \cdot 3 \cdot 7 \cdot 43 \cdot 127 \cdot 211 \cdot 337
    = 16\;309\;243\;734,  $$
each with 
$$ g= 43 \cdot 127 \cdot 211 \cdot 337. $$ 
The values of $d$, $z$ and the $r_q$ for primes $q \mid g$ are as
shown in Table \ref{example-table}. (We omit the values of $k$.)

\begin{table}[ht] 
\centerline{ 
\begin{tabular}{|c|c|c|c|c|c|c|} \hline
 & $r_{43}$ & $r_{127}$ & $r_{211}$ & $r_{337}$ & $d$ & $z$ \\ \hline
 $G_1$ & $2$ & $3$ & $7$ & $21$ & $42$ & $1$\\ \hline
 $G_2$ & $2$ & $3$ & $7$ & $21$ & $42$ & $1$ \\ \hline
 $G_3$ & $42$ & $21$ & $14$ & $7$ & $42$ & $1$ \\ \hline
 $G_4$ & $2$ & $7$ & $7$ & $14$ & $14$ & $3$ \\ \hline
\end{tabular}
}  
\vskip3mm

\caption{Parameters for some groups of order $n$.} 
 \label{example-table}  	
\end{table}

No two of these groups are isomorphic. Although the parameters shown
in Table \ref{example-table} are the same for $G_1$ and $G_2$, the
(omitted) values of $k$ are different and do not generate the same
subgroup of $\Z_e^\times$.  In \cite{AB-galois}, we took $\Gamma=G_1$
and, for $1 \leq i \leq 4$, we calculated the number $e(\Gamma,G_i)$ of
Hopf-Galois structures of type $G_i$ on a Galois extension with Galois
group isomorphic to $\Gamma$. We obtained a different answer for each
$i$. In particular $e(G_1,G_1) \neq e(G_1,G_2)$, since the formula in \cite[Theorem 2.2]{AB-galois} for
the number of Hopf-Galois structures depends on the sets $S_h=S_h^+
\cup S_h^-$ and not just on the $r_q$.

When we calculate the number of skew braces $b(G_i,G_j)$, the
nonisomorphic groups $G_1$, $G_2$, $G_3$ behave identically, since the
formula in Theorem \ref{thm-braces} of this paper only depends on the
parameters $g$, $d$, $\gamma$ and $\delta$. In all these cases we have
$\gamma=g$, so that $\gamma \mid e$. If $i$, $j \in \{1,2,3\}$, we
have $d=\delta=42$ and $w=12$, whereas if $i=4$ or $j=4$ then
$\gcd(\delta,d)=14$ and $w=6$.  We can then read off from Theorem
\ref{thm-braces} that
$$  b(G_i,G_j) = \begin{cases} 192 & \mbox{ if } i, j \in \{1,2,3\}, \\
           96 & \mbox{ if } i=4 \mbox{ or } j=4. \end{cases} $$

%

\section{Special Cases}

In this final section, we use Theorem \ref{thm-braces} to calculate
$b(M,A)$ for the various special cases considered in \cite[\S
  9]{AB-galois}. Throughout this section, $A=G(d,e,k)$ and
$M=G(\delta, \epsilon, \kappa)$ are two groups of squarefree order
$n=de=\delta \epsilon$, and $g$, $z,$ $\gamma$, $\zeta$, $w$ are as
defined before the statement of Theorem \ref{thm-braces}.

\subsection{When $M$ or $A$ is cyclic or dihedral}  \label{Gam-cyc}

\begin{corollary} \ 

\begin{itemize}
\item[(i)] If $M$ is cyclic and $A$ is an arbitrary group of order $n$, then
$b(M,A)=2^{\omega(g)}$.

\item[(ii)] If $A$ is cyclic and $M$ is an arbitrary group of order $n$, then 
$b(M,A)= 1$.

\item[(iii)] If $M$ is dihedral of order $n=2m$ with $m$ odd and squarefree,
  and $A$ is an arbitrary group of order $n$, then 
$$ b(M,A) = \begin{cases} 2^{\omega(g)} & \mbox{ if } d=1 \mbox{ or } 2, \\
      0 & \mbox{ if } d>2 \end{cases}.  $$ 

\item[(iv)] If $A$ is dihedral of order $n=2m$ with $m$ odd and squarefree,
  and $M$ is an arbitrary group of order $n$, then
  $b(M,A)=2^{\omega(m)}$.
\end{itemize} 
\end{corollary}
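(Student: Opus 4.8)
The plan is to read off each case directly from Theorem \ref{thm-braces}, after putting the relevant group into the standard form of Lemma \ref{sf-class} and computing the parameters $g$, $\gamma$ and $w$. First I would record the two presentations needed: a cyclic group of order $n$ is $G(1,n,1)$, and (for $m\geq 3$) the dihedral group of order $n=2m$ with $m$ odd is $G(2,m,\kappa)$ with $\kappa\equiv -1\pmod m$; this is legitimate since $\ord_m(-1)=2=\delta$ when $m$ is odd.

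Parts (i) and (ii) are then immediate substitutions. For (i), taking $M=G(1,n,1)$ gives $\delta=1$, hence $w=\varphi(\gcd(1,d))=1$, while $\kappa\equiv 1\pmod\epsilon$ gives $\zeta=\epsilon$ and so $\gamma=1\mid e$; Theorem \ref{thm-braces} yields $b(M,A)=2^{\omega(g)}$. For (ii), taking $A=G(1,n,1)$ gives $k\equiv 1\pmod e$, so $z=e$, $g=1$ and $2^{\omega(g)}=1$; also $w=\varphi(\gcd(\delta,1))=1$, and since $\gamma\mid\epsilon\mid n=e$ we have $\gamma\mid e$. Thus $b(M,A)=1$.

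The dihedral cases are where there is something to verify. For (iii), with $M=G(2,m,-1)$ I would compute $\zeta=\gcd(-2,m)=1$ (as $m$ is odd), so $\gamma=m$, and $w=\varphi(\gcd(2,d))=1$ for every $d$. The only real point is to decide when $\gamma=m$ divides $e$. Since $n=2m$ is squarefree and $\gcd(d,e)=1$, the pair $(d,e)$ is a coprime factorisation of $2m$, so each odd prime dividing $m$ lies entirely in $d$ or entirely in $e$; hence $m\mid e$ holds exactly when $d$ contains no odd prime factor, i.e.\ $d\in\{1,2\}$, and fails as soon as $d>2$. Feeding this into Theorem \ref{thm-braces} gives $2^{\omega(g)}$ when $d\in\{1,2\}$ and $0$ when $d>2$.

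For (iv), with $A=G(2,m,-1)$ I get $z=\gcd(-2,m)=1$, so $g=m$ and $2^{\omega(g)}=2^{\omega(m)}$, and again $w=\varphi(\gcd(\delta,2))=1$. The crux is to show $\gamma\mid e=m$ for every $M$, which I would do by proving $\gamma$ is always odd. The prime $2$ divides exactly one of $\delta,\epsilon$; if $\epsilon$ is odd then so is $\gamma\mid\epsilon$, while if $\epsilon$ is even then $\kappa\in\Z_\epsilon^\times$ forces $\kappa$ odd, so $2\mid\kappa-1$ and $v_2(\gcd(\kappa-1,\epsilon))=v_2(\epsilon)=1$, giving $v_2(\gamma)=0$. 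Since $\gamma\mid\epsilon\mid 2m$ and $\gamma$ is odd, $\gamma\mid m=e$, so $\gamma\mid e$ always and $b(M,A)=2^{\omega(m)}$. This parity argument is the only genuinely nonroutine step in the whole corollary; everything else is mechanical evaluation of $g$, $\gamma$ and $w$ followed by an appeal to Theorem \ref{thm-braces}.
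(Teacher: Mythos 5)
Your proposal is correct and follows essentially the same route as the paper: write the cyclic and dihedral groups in the form $G(d,e,k)$, compute $g$, $\gamma$, $\zeta$, $w$ in each case, and substitute into Theorem \ref{thm-braces}. The only point where you add something is in (iv), where you prove directly that $\gamma$ is always odd (via the $2$-adic valuation of $\zeta=\gcd(\kappa-1,\epsilon)$), whereas the paper cites this fact from Remark 6.1 of \cite{AB-galois}; your argument is a valid replacement for that citation.
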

\begin{proof}

\noindent (i) We have $\gamma=1$ so $w=1$ and $b(M,A)=2^{\omega(g)} w
  =2^{\omega(g)}$. 

\noindent (ii) We have $d=g=1$, and $e=n$ so $\gamma \mid e$. Again, $w=1$. Thus
  $b(M,A)=1$. 

\noindent (iii) We have $\delta=2$ and $\gamma=m$. If $d>2$ then $\gamma
\nmid e$, so $b(M,A)=0$. If $d \leq 2$ then $w=1$ and $b(M,A)=2^{\omega(g)}$. 

\noindent (iv) We have $d=2$, $g=m$ and $z=1$. As $2 \nmid \gamma$
(cf.~\cite[Remark 6.1]{AB-galois}), we 
necessarily have $\gamma \mid e$. Again $w=1$, so
$b(M,A)=2^{\omega(m)}$. 
\end{proof}

\begin{remark}
In \cite{Rump-cyc}, Rump determines all braces whose multiplicative
group is a finite cyclic group (not necessarily of squarefree
order). Since he treats only braces, not skew braces, the additive
group is always abelian. As any abelian group of squarefree order is
necessarily cyclic, the only case covered by both Rump and our result
is when $M$ and $A$ are both cyclic of squarefree order, so
$b(M,A)=1$.
\end{remark} 

\subsection{When $n$ the product of two primes}

Let $n=pq$ for prime numbers $p>q$. If $p \not \equiv 1 \pmod{q}$ then any
group of order $pq$ is cyclic and $b(M,A)=1$ by Corollary
\ref{Gam-cyc}(i).  We therefore suppose that $p \equiv 1
\pmod{q}$. There are then two groups of order $n$, the cyclic group
$C_n$ (for which $g=d=1$ and $z=pq$) and the nonabelian group $C_p
\rtimes C_q$ (for which $g=p$, $d=q$, $z=1$). We easily obtain the
values of $b(M,A)$ from Theorem \ref{thm-braces}.

\begin{table}[ht] 
\centerline{ 
\begin{tabular}{|c|c|c|} \hline
         & $A=C_n$ & $A=C_p \rtimes C_q$ \\ \hline
 $M=C_n$ &  $1$ & $2$   \\ \hline
 $M=C_p \rtimes C_q$ &  $1$  & $2(q-1)$   \\ \hline
\end{tabular}
}  
\vskip3mm

\caption{Skew braces for two primes.} 
 \label{braces-2p}  	
\end{table}

\begin{corollary} \label{pq-cor}
If $n=pq$ where $p$, $q$ are primes with $p \equiv 1 \pmod{q}$, and
$M$, $A$ are groups of order $pq$, then the number $b(M,A)$ of skew
braces with multiplicative group $M$ and additive group $A$ is as
shown in Table \ref{braces-2p}. In particular, there are in total
$2q+2$ skew braces of order $pq$.
\end{corollary}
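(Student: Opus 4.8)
The plan is to apply Theorem \ref{thm-braces} directly to each of the four ordered pairs $(M,A)$, reading off the relevant parameters from the presentation in Lemma \ref{sf-class}. First I would record the structural data of the two groups of order $n=pq$. The cyclic group $C_n$ is $G(1,n,1)$, so as an additive group it has $d=1$, $e=n$, $k\equiv 1$, whence $z=\gcd(k-1,e)=n$ and $g=e/z=1$; viewed as a multiplicative group its parameters are $\delta=1$, $\epsilon=n$, $\zeta=n$, $\gamma=1$. The nonabelian group $C_p\rtimes C_q$ is $G(q,p,k)$ with $\ord_p(k)=q$, so $d=q$, $e=p$; since $k\not\equiv 1\pmod p$ we get $z=\gcd(k-1,p)=1$ and $g=p$, and as a multiplicative group its parameters are $\delta=q$, $\epsilon=p$, $\zeta=1$, $\gamma=p$. (The additive values $g=d=1$, $z=n$ and $g=p$, $d=q$, $z=1$ are exactly those recorded in the text preceding the corollary.)

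Next I would check that $\gamma\mid e$ in all four cases, so that the vanishing branch of Theorem \ref{thm-braces} never occurs. When $M=C_n$ we have $\gamma=1$, which divides every $e$; when $M=C_p\rtimes C_q$ we have $\gamma=p$, and $p\mid e$ holds whether $A=C_n$ (so $e=pq$) or $A=C_p\rtimes C_q$ (so $e=p$). Hence $b(M,A)=2^{\omega(g)}w$ throughout, with $g$ determined by $A$ and $w=\varphi(\gcd(\delta,d))$ determined by the pair.

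It then remains to evaluate $2^{\omega(g)}w$ in each case. The factor $2^{\omega(g)}$ equals $1$ when $A=C_n$ (since $g=1$) and equals $2$ when $A=C_p\rtimes C_q$ (since $g=p$ is prime). The factor $w$ is $\varphi(1)=1$ in three of the four cases, and equals $\varphi(\gcd(q,q))=\varphi(q)=q-1$ only in the case $M=A=C_p\rtimes C_q$. Substituting these values yields the four entries $1$, $2$, $1$, $2(q-1)$ of Table \ref{braces-2p}, and summing gives a total of $1+2+1+2(q-1)=2q+2$.

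Since the whole argument is a substitution of explicit parameter values into the closed formula of Theorem \ref{thm-braces}, there is no genuine obstacle. The only point requiring care is the bookkeeping: one must correctly attribute $g$ to the additive group $A$ and $w$ to the pair $(M,A)$, and in particular observe that $\zeta=1$ for the nonabelian group forces $\gamma=p$ (not $\gamma=1$), which is what makes $b(M,A)$ nonzero while still keeping $w=q-1$ confined to the single pair in which both $\delta$ and $d$ equal $q$.
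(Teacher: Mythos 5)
Your proposal is correct and follows essentially the same route as the paper, which likewise just reads off the parameters $d$, $g$, $z$ (and their multiplicative counterparts) for $C_n=G(1,n,1)$ and $C_p\rtimes C_q=G(q,p,k)$ and substitutes them into Theorem \ref{thm-braces}; your computations of $\gamma$, $w$ and $2^{\omega(g)}$ all agree with Table \ref{braces-2p}. The only difference is that you spell out the verification of $\gamma\mid e$ and the bookkeeping explicitly, which the paper leaves as ``we easily obtain.''
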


\begin{remark}
For $n$ as in Corollary \ref{pq-cor} with $n \leq 30$, namely $n=6$,
$10$, $14$, $21$, $22$, $26$, the total $2q+2$ agrees with \cite[Table
  5.1]{GV}.
\end{remark}

\begin{table}[b] 
\centerline{ 
\begin{tabular}{|c|ccc|c|c|} \hline
 Factorisation &  $d$ & $g$ & $z$ & Condition & \# groups  \\ \hline
 $1$  & $1$ & $1$ & $p_1 p_2 p_3$ & & $1$  \\
 $2$ &  $p_1$ & $p_2$ & $p_3$ & $p_2 \equiv 1 \pmod{p_1}$ & $1$  \\
 $3$ & $p_1$ & $p_3$ & $p_2$ & $p_3 \equiv 1 \pmod{p_1}$ & $1$  \\
 $4$ & $p_1$ & $p_2 p_3$ & $1$ & $p_2 \equiv p_3 \equiv 1 \pmod{p_1}$ &
 $p_1-1$  \\ 
 $5$ & $p_2$ & $p_3$ & $p_1$ & $p_3 \equiv 1 \pmod{p_2}$ & $1$  \\ 
 $6$ & $p_1 p_2$ & $p_3$ & $1$ & $p_3  \equiv 1 \pmod{p_1 p_2}$ &
 $1$  \\  \hline
\end{tabular}
}  
\vskip5mm

\caption{Isomorphism types for groups of order
  $n=p_1 p_2 p_3$.}  \label{3-prime-table} 
\end{table}   


\begin{sidewaystable} 
\centerline{ 
\begin{tabular}{|c|c|c|c|c|c|c|} \hline
$\downarrow M \quad A \rightarrow$  &
  $1$ & $2$ & $3$  & $4$ & $5$ & $6$  \\ \hline 
$1$
   & $1$ & $2$ & $2$ & $4$ & $2$ & $2$  \\ \hline
$2$
   & $1$ & $2(p_1-1)$ & $2(p_1-1)$ & $4(p_1-1)$ & $0$ & $0$  \\ \hline
$3$
   & $1$ & $2(p_1-1)$ & $2(p_1-1)$ &
  $4(p_1-1)$ & $2$ & $2(p_1-1)$ \\ \hline
$4$ 
   & $1$ & $2(p_1-1)$ & $2(p_1-1)$ & $4(p_1-1)$ &
   $0$ & $0$  \\ \hline 
$5$
   & $1$ & $2$ & $2$ & $4$ & $2(p_2-1)$ & $2(p_2-1)$  \\
  \hline 
$6$
   & $1$ & $2(p_1-1)$ & $2(p_1-1)$ & $4(p_1-1)$ &
  $2(p_2-1)$ & $2(p_1-1)(p_2-1)$   \\  \hline
\end{tabular}
}  
\vskip5mm

\caption{Numbers of skew braces for
  $n=p_1 p_2 p_3$.}  \label{braces} 
\end{sidewaystable}   

\subsection{When $n$ is the product of three primes}

Let $n=p_1 p_2 p_3$ where $p_1<p_2<p_3$ are primes. Subject to certain
congruence conditions between $p_1$, $p_2$ and $p_3$, there are $6$
possible factorisations $n=dgz$ which give rise to groups of order
$n$. We label these factorisations $1$--$6$ as in Table
\ref{3-prime-table}.  The last column shows the number of isomorphism
types of group for each factorisation, as explained in
\cite[\S9.3]{AB-galois}.

Applying Theorem \ref{thm-braces} to each combination of $M$ and $A$,
we obtain the following result:

\begin{theorem} \label{3p-thm}
Let $n=p_1 p_2 p_3$, where $p_1$, $p_2$, $p_3$ are primes satisfying
the conditions $p_i \equiv 1 \pmod{p_j}$ for $i>j$. Let $M$ and $A$ be
  groups of order $n$. Then the number $b(M,A)$ of skew braces with
  multiplicative group $M$ and additive group $A$ is as shown in 
Table \ref{braces}, where the rows (respectively, columns) correspond
to the factorisations of $n$ giving rise to $M$ (respectively $A$) as
in Table \ref{3-prime-table}.
\end{theorem}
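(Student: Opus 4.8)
The plan is to apply Theorem~\ref{thm-braces} directly to each of the $36$ ordered pairs $(M,A)$ whose types range over the six factorizations of Table~\ref{3-prime-table}, reading off all parameters from that table. First I would observe that the hypotheses $p_i \equiv 1 \pmod{p_j}$ for $i>j$ are precisely what is needed for every one of the six factorization types to occur: each condition in the fifth column of Table~\ref{3-prime-table} is implied by these congruences, so all rows and columns of Table~\ref{braces} correspond to genuine groups of order $n$.

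Next I would isolate which data actually feed into the formula. For the group $A$ coming from a given factorization, the parameters are the listed $d$, $g$, $z$, with $e=gz=n/d$. For the group $M$, only $\delta$ and $\gamma$ enter Theorem~\ref{thm-braces}, and $\gamma$ is nothing but the $g$-entry of the factorization producing $M$; thus the pair $(\delta,\gamma)$ for $M$ is read from the same table. Consequently each entry of Table~\ref{braces} is determined by three quantities: the truth of $\gamma \mid e$, the integer $\omega(g)$, and $w=\varphi(\gcd(\delta,d))$.

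The simplification I would exploit is that, as $n$ is squarefree and $\gamma \mid n$, one has $\gamma \mid e=n/d$ if and only if $\gcd(\gamma,d)=1$. Scanning the table, this fails exactly when $M$ has type $2$ or $4$ and $A$ has type $5$ or $6$, since in each of these four cases $p_2$ divides both $\gamma$ and $d$; these give the four zero entries. For every other pair, $\omega(g)$ depends only on the type of $A$ (taking the values $0,1,1,2,1,1$ for types $1$--$6$), while $\gcd(\delta,d)$, and hence $w$, is computed prime-by-prime from the two $d$-entries. Forming $2^{\omega(g)}\,w$ in each surviving case then reproduces every entry of Table~\ref{braces}.

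No genuine obstacle arises beyond the bookkeeping of filling in $36$ entries correctly. I would keep this under control by noting that each ingredient factors through a single type --- $e$ and $\omega(g)$ through $A$, and $\gamma$ and $\delta$ through $M$ --- so that the whole table assembles from a few one-dimensional lookups together with one gcd, rather than from $36$ independent computations.
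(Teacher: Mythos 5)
Your proposal is correct and follows essentially the same route as the paper, which likewise obtains Table \ref{braces} by directly substituting the parameters $d$, $g$, $\delta$, $\gamma$ from Table \ref{3-prime-table} into Theorem \ref{thm-braces} for each pair of types. Your observations that $\gamma \mid e$ is equivalent to $\gcd(\gamma,d)=1$ (isolating the four zero entries for $M$ of type $2$ or $4$ against $A$ of type $5$ or $6$) and that each ingredient factors through a single type are accurate and make the bookkeeping transparent.
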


\begin{remark}
One can easily obtain analogous results when the congruences $p_i
\equiv 1 \pmod{p_j}$ do not all hold, simply by omitting from Table
\ref{braces} the rows and columns for which the necessary congruences
(as shown in Table \ref{3-prime-table}) are not satisfied.
\end{remark}
 
\begin{corollary}
Let $n=p_1 p_2 p_3$ be as in Theorem \ref{3p-thm}.
\begin{itemize}
\item[(i)] For any group $M$ of order $n$, the number $b(M,\cdot)$ of
  skew braces (up to isomorphism) with multiplicative group $M$ is as
  shown in Table \ref{3p-mul}.
\item[(ii)] For any group $A$ of order $n$, the number $b(\cdot,A)$ of
  skew braces (up to isomorphism) with additive group $A$ is as
  shown in Table \ref{3p-add}.
\item[(iii)] The total number of skew braces of order $n$ (up to
  isomorphism) is 
$$ 4p_1^3 + 4p_1^2 +2p_1p_2 + p_1 + 4p_2 + 4. $$
\end{itemize}
(The rows in Tables \ref{3p-mul}, \ref{3p-add} correspond the
factorisations of $n$ giving $M$ or $A$, as in Table \ref{3-prime-table}.)
\end{corollary}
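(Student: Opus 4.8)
The plan is to read off parts~(i)--(iii) directly from the entries of Table~\ref{braces}, weighted by the multiplicities in the last column of Table~\ref{3-prime-table}. The point that makes this legitimate is that each entry $b_{ij}$ of Table~\ref{braces} depends only on the factorisation types $i$ (producing $M$) and $j$ (producing $A$), and not on the individual isomorphism class within a type. Indeed, the only data entering the formula of Theorem~\ref{thm-braces} are $d$, $g$ and $e=gz$ (read from $A$) together with $\delta$ and $\gamma$ (coming from $M$); and since $\zeta$, $\gamma$ are for $M$ exactly what $z$, $g$ are for $A$, we have $\delta=d_i$ and $\gamma=g_i$, the $d$- and $g$-columns of Table~\ref{3-prime-table} in row $i$, while $d=d_j$, $g=g_j$, $e=g_j z_j$. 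All of these are fixed by the factorisation type. The only type containing more than one isomorphism class is factorisation~$4$ (with $d=p_1$, $g=p_2p_3$, $z=1$, giving $p_1-1$ classes), and these invariants are the same for every class in that type; hence $b_{ij}$ is well defined.

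Granting this, let $n_j$ be the multiplicity of factorisation $j$, so $n_4=p_1-1$ and $n_j=1$ for $j\neq 4$. For part~(i) I would fix a multiplicative group of type $i$ and sum over all isomorphism classes of $A$:
\begin{equation*}
  b(M,\cdot)=\sum_{j=1}^{6} n_j\,b_{ij}=(p_1-1)\,b_{i4}+\sum_{j\neq 4} b_{ij}.
\end{equation*}
Running this over the six rows of Table~\ref{braces} produces Table~\ref{3p-mul}. Symmetrically, for part~(ii) I would fix $A$ of type $j$ and sum over $M$:
\begin{equation*}
  b(\cdot,A)=\sum_{i=1}^{6} n_i\,b_{ij}=(p_1-1)\,b_{4j}+\sum_{i\neq 4} b_{ij},
\end{equation*}
which gives Table~\ref{3p-add}. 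Each is a row- or column-wise sum in which only the entry in position~$4$ carries the weight $p_1-1$; these are purely mechanical once the preceding paragraph is in place.

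For part~(iii) the total is the doubly-weighted sum
\begin{equation*}
  \sum_{i,j} n_i n_j\,b_{ij}=\sum_{i=1}^{6} n_i\,b(i,\cdot)=\sum_{j=1}^{6} n_j\,b(\cdot,j),
\end{equation*}
so I would weight the six entries of Table~\ref{3p-add} by $n_j$ and add. Writing $a=p_1-1$, the $p_2$-free contributions collect to $4a^3+16a^2+23a+19$ and the rest to $(6+2a)(p_2-1)$; resubstituting $a=p_1-1$ turns the first into $4p_1^3+4p_1^2+3p_1+8$ and the second into $2p_1p_2-2p_1+4p_2-4$, and adding these yields the claimed total $4p_1^3+4p_1^2+2p_1p_2+p_1+4p_2+4$.

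The only genuine content is the well-definedness established in the first paragraph; the remainder is bookkeeping, and the main risk of error lies in the final expansion of part~(iii), where the cubic in $p_1$ and the cross term $p_1p_2$ must be tracked with care. A convenient internal check is that summing $n_i\,b(i,\cdot)$ across the rows of Table~\ref{3p-mul} must return the same polynomial as summing $n_j\,b(\cdot,j)$ across Table~\ref{3p-add}, so the two routes to part~(iii) cross-validate each other.
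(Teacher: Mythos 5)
Your proposal is correct and follows essentially the same route as the paper: summing the rows (resp.\ columns) of Table~\ref{braces} with the Factorisation~$4$ entry weighted by $p_1-1$, and then weighting once more for the total. The well-definedness observation in your first paragraph is already implicit in the statement of Theorem~\ref{3p-thm}, and your arithmetic for parts (i)--(iii) checks out.
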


\begin{table}[b] 
\centerline{ 
\begin{tabular}{|c|c|} \hline
Factorisation for $M$ & $b(M,\cdot)$ \\ \hline 
$1$ &  $4p_1+5$ \\ \hline
$2$ & $4p_1^2-4p_1+1$ \\ \hline
$3$ & $4p_1^2-2p_1+1$ \\ \hline
$4$ & $4p_1^2-4p_1+1$ \\ \hline 
$5$ & $4p_1+4p_2-3$ \\ \hline 
$6$ & $4p_1^2+2p_1p_2-6p_1+1$  \\ \hline  
\end{tabular}
}  
\vskip5mm

\caption{Number of skew braces with multiplicative
  group $M$.}  \label{3p-mul} 
\end{table} 

\bigskip

\begin{table}[b]
\centerline{
\begin{tabular}{|c|c|} \hline
Factorisation for $A$ & $b(\cdot,A)$ \\ \hline 
$1$ & $p_1+4$  \\ \hline
$2$ & $2p_1^2+2p_1$  \\ \hline
$3$ & $2p_1^2+2p_1$  \\ \hline
$4$ & $4p_1^2+4p_1$  \\ \hline 
$5$ & $4p_2$  \\ \hline 
$6$ & $2p_1 p_2$ \\ \hline 
\end{tabular}
}  
\vskip5mm

\caption{Number of skew braces with additive
  group $A$.}  \label{3p-add} 
\end{table}   
\begin{proof}
For each factorisation for $M$, the values of $b(M,\cdot)$ are
obtained by adding the entries $b(M,A)$ in the corresponding row of
Table \ref{braces}, where the entry for $A$ with Factorisation $4$ is
multiplied by $p_1-1$ since there are $p_1-1$ isomorphism types. The
values of $b(\cdot,A)$ are obtained similarly from the columns of
Table \ref{braces}. The total number of skew braces of order $n$ is
obtained by adding the values of $b(M,\cdot)$, or of $b(\cdot,A)$,
again with the value for Factorisation $4$ multiplied by $p_1-1$.
\end{proof}

\bibliography{GSQ-bib}
\end{document}